\newtheorem{theorem}{Theorem}[section]
\newtheorem{corollary}{Corollary}
\newtheorem{lemma}[theorem]{Lemma}
\newtheorem{proposition}{Proposition}
\theoremstyle{definition}
\newtheorem{definition}[theorem]{Definition}
\author{Benjamin P. Russo$^{1*}$}
\thanks{$^{1}$ Farmingdale State College (SUNY), Department of Mathematics, \href{}{russobp@farmingdale.edu}}
\author{Rushikesh Kamalapurkar$^{2\dagger\ddagger}$}
\thanks{$^2$ Oklahoma State University, School of Mechanical and Aerospace Engineering, \href{}{rushikesh.kamalapurkar@okstate.edu}}
\author{Dongsik Chang$^{3\star}$}
\thanks{$^3$ Oregon State University, School of Mechanical, Industrial, and Manufacturing Engineering, \href{}{changdo@oregonstate.edu}}
\author{Joel A. Rosenfeld$^{4\dagger\ddagger}$}
\thanks{$^4$ University of South Florida, Department of Mathematics and Statistics, \href{}{rosenfeldj@usf.edu}}
\thanks{$^*$ Corresponding author}
\thanks{$^{\star}$ This work was partially completed while the author was at Michigan State University}
\thanks{$^\dagger$ Research supported by AFOSR Award FA9550-20-1-0127.}
\thanks{$^\ddagger$ Research supported by NSF grant ECCS-2027976.}
\title{Motion Tomography via Occupation Kernels}
\date{}
\begin{document}
\maketitle

\begin{abstract}
The goal of motion tomography is to recover a description of a vector flow field using information on the trajectory of a sensing unit. 
 In this paper, we develop a predictor corrector algorithm designed to recover vector flow fields from trajectory data with the use of occupation kernels developed by Rosenfeld et al.  \cite{rosenfeld2019dynamic,rosenfeld2019occupation}. Specifically, we use the occupation kernels as an adaptive basis; that is, the trajectories defining our occupation kernels are iteratively updated to improve the estimation on the next stage. Initial estimates are established, then under mild assumptions, such as relatively straight trajectories, convergence is proven using the Contraction Mapping Theorem. We then compare to the established method by Chang et al. \cite{SCC.Chang.Wu.ea2017} by defining a set of error metrics. We found that for simulated data, which provides a ground truth, our method offers a marked improvement and that for a real-world example we have similar results to the established method. 
\end{abstract}

\section{Introduction}
Over the past decade, unmanned aircraft and underwater systems have evolved significantly and are on the verge of becoming a ubiquitous part of urban and littoral landscape. To compensate for the lack of access to the global positioning system (GPS), unmanned underwater vehicles (UUVs) often rely on the knowledge of the flow field to improve localization \cite{SCC.Petrich.Woolsey.ea2009}. Similarly, accurate estimation of urban wind fields is widely acknowledged to be a significant challenge for implementation of traffic control systems (such as \cite{SCC.Aweiss.Owens.ea2018}) for unmanned air vehicles (UAVs) \cite{SCC.Stepanyan.Krishnakumar2017}. While it is possible to model air flow fields and ocean currents using measurements from on-board sensors, lack of accurate localization (in the case if UUVs) and vechicle-induced noise (in the case of UAVs, especially multi-rotor UAVs), creates significant challenges in acquisition and processing of the data generated by on-board sensors. The aforementioned challenges, along with the payload reduction associated with removing flow sensors, motivates the development of estimation techniques that rely only on the effect of the flow field on the motion of UAVs and UUVs, and not on direct measurements of the flow velocities. 

Motion tomography refers to the reconstruction of a vector field using its accumulated effects on mobile sensing units as they travel through the field \cite{wu2013glider,SCC.Chang.Wu.ea2017}. Motion tomography allows for the use of low cost mobile underwater/air vehicles as sensors to accumulate sufficient data for estimation of vector fields resulting from wind and ocean currents. As a result, military applications such as ocean current mapping for effective navigation of mine countermeasure UUVs in littoral environments, commercial applications such as wind field mapping for navigation of small package delivery UAVs, and disaster response applications such as wind field mapping for the prediction of flame front propagation and smoke spread, stand to benefit from fast and accurate motion tomography. In this paper we propose an algorithm for motion tomography based on occupation kernels developed in \cite{rosenfeld2019dynamic, rosenfeld2019occupation}. We show this algorithm converges via the contraction mapping theorem. 

The developed approach to motion tomography has several advantages over existing techniques such as \cite{SCC.Chang.Wu.ea2017}. The flow field is approximated here using the occupation kernels as basis functions for approximation, whereas \cite{SCC.Chang.Wu.ea2017} requires a piecewise constant description of the flow field or a parameterization with respect to Gaussian RBFs. Moreover, \cite{SCC.Chang.Wu.ea2017} employs a renormalization routine which imposes limitations on the motion of the mobile sensors. The proposed occupation kernel method avoids the renormalization and does not add further restrictions on the motion. Finally, the representation of the flow field with respect to the occupation kernel basis allows for the application of the approximation abilities of RKHSs, which are exploited in the convergence analysis.

\section{Tools}
A reproducing kernel Hilbert space (RKHS), $H$, over a set $X$ is a Hilbert space of real valued functions over the set $X$ such that for all $x \in X$ the evaluation functional $E_x g := g(x)$ is bounded. As such, the Riesz representation theorem guarantees, for all $x \in X$, the existence of a function $k_x \in H$ such that $\langle g, k_x \rangle_H = g(x)$, where $\langle \cdot, \cdot \rangle_H$ is the inner product for $H$ \cite[Chapter 1]{paulsen2016introduction}. The function $k_x$ is called the reproducing kernel function at $x$, and the function $K(x,y) = \langle k_y, k_x \rangle_H$ is called the kernel function corresponding to $H$.

\begin{definition}\label{def:occ}Let $X \subset \mathbb{R}^n$ be compact, $H$ be a RKHS of continuous functions over $X$, and $\gamma:[0,T] \to X$ be a bounded measurable trajectory. The functional $g \mapsto \int_0^T g(\gamma(\tau)) d\tau$ is bounded, and may be respresented as $\int_0^T g(\gamma(\tau)) d\tau = \langle g, \Gamma_{\gamma}\rangle_H,$ for some $\Gamma_{\gamma} \in H$ by the Riesz representation theorem. The function $\Gamma_{\gamma}$ is called the occupation kernel corresponding to $\gamma$ in $H$.\end{definition}

The value of an inner product against an occupation kernel in a RKHS can be approximated by leveraging quadrature techniques for integration. The numerical experiments described below utilize Simpson's rule when computing inner products involving occupation kernels and their associated Gram matrices, pivotal in the analysis contained in Theorem \ref{contraction mapping}. We present Theorem \ref{thm:simpsons-convergence} which shows convergence with Simpson's Rule.  Moreover, the occupation kernels themselves can be expressed as an integral against the kernel function in a RKHS as demonstrated in Proposition \ref{prop:integral-rep}. Theorem \ref{thm:simpsons-convergence} and Proposition \ref{prop:integral-rep} originally appear in \cite{rosenfeld2019occupation}, but are reproduced below for completeness.

\begin{proposition}\label{prop:integral-rep}
Let $H$ be a RKHS over a compact set $X$ consisting of continuous functions and let $\gamma : [0,T] \to X$ be a continuous trajectory as in Definition \ref{def:occ}. The occupation kernel corresponding to $\gamma$ in $H$, $\Gamma_{\gamma}$, may be expressed as 
\begin{equation}\label{eq:integral-rep}\Gamma_\gamma(x) = \int_0^T K(x,\gamma(t)) dt.\end{equation}
\end{proposition}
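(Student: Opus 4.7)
The plan is to prove the formula by simply applying the reproducing property twice and appealing to the symmetry of the kernel. Since $\Gamma_\gamma$ lives in $H$, its value at a point $x \in X$ can be recovered by pairing it against $k_x$, and the defining property of $\Gamma_\gamma$ then turns that pairing into the desired integral.

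First, I would fix $x \in X$ and use the reproducing property of $H$ to write $\Gamma_\gamma(x) = \langle \Gamma_\gamma, k_x \rangle_H$. Since $H$ is a real Hilbert space, I may interchange the order of the arguments to get $\Gamma_\gamma(x) = \langle k_x, \Gamma_\gamma \rangle_H$. Next, I would apply the defining identity of the occupation kernel from Definition \ref{def:occ}, with $g = k_x \in H$, yielding
\begin{equation*}
\Gamma_\gamma(x) \;=\; \langle k_x, \Gamma_\gamma \rangle_H \;=\; \int_0^T k_x(\gamma(\tau))\, d\tau.
\end{equation*}
Finally, I would rewrite the integrand using $k_x(\gamma(\tau)) = \langle k_x, k_{\gamma(\tau)} \rangle_H = K(\gamma(\tau),x) = K(x,\gamma(\tau))$, which gives the representation in \eqref{eq:integral-rep}.

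To make sure all the steps are legitimate, I would remark briefly that the integrand $\tau \mapsto k_x(\gamma(\tau)) = K(x,\gamma(\tau))$ is continuous (hence bounded and measurable) on $[0,T]$, because $K(x,\cdot)$ is continuous on $X$ (as $H$ consists of continuous functions) and $\gamma$ is continuous on $[0,T]$. This justifies that the right-hand side of \eqref{eq:integral-rep} is a well-defined Lebesgue (indeed Riemann) integral, and that the functional defining $\Gamma_\gamma$ makes sense when applied to $k_x$.

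There is no real obstacle here; the only point requiring minor care is the justification that $\Gamma_\gamma$ itself exists, which is already supplied by Definition \ref{def:occ} through the Riesz representation theorem. Everything else reduces to two applications of the reproducing property together with the symmetry $K(x,y) = K(y,x)$ of the kernel in the real setting.
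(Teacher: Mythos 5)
Your argument is exactly the paper's proof: apply the reproducing property to write $\Gamma_\gamma(x) = \langle \Gamma_\gamma, K(\cdot,x)\rangle_H$, swap the arguments, invoke the defining property of the occupation kernel with $g = K(\cdot,x)$, and finish with the symmetry of $K$. The added remark on continuity of $t \mapsto K(x,\gamma(t))$ is a harmless (and correct) extra justification.
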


\begin{proof}
Note that $\Gamma_\gamma(x) = \langle \Gamma_\gamma, K(\cdot,x)\rangle_H$, by the reproducing property of $K$. Consequently,
\begin{gather*}
    \Gamma_\gamma(x) = \langle \Gamma_\gamma, K(\cdot,x)\rangle_H= \langle K(\cdot,x), \Gamma_\gamma \rangle_H
     = \int_0^T K(\gamma(t),x) dt
    = \int_0^T K(x,\gamma(t)) dt,
\end{gather*}
which establishes the result.
\end{proof}

Leveraging Proposition \ref{prop:integral-rep}, quadrature techniques can be demonstrated to give not only pointwise convergence but also norm convergence in the RKHS, which is a strictly stronger result.

\begin{theorem}\label{thm:simpsons-convergence}
Under the hypothesis of Proposition \ref{prop:integral-rep}, let $t_0 = 0 < t_1 < t_2 < \ldots < t_{F} = T$ (with $F$ even and $t_i$ evenly spaced), suppose that $\gamma$ is a fourth order continuously differentiable trajectory and $H$ is composed of fourth order continuously differentiable functions. Set $h$ to satisfy $t_i = t_0 + ih$, and consider 
\begin{gather}\label{eq:quadrature-rep-simpsons}
\hat \Gamma_\gamma(x) := \frac{h}{3}\left(K(x,\gamma(t_0)) + 4 \sum_{i=1}^{\frac{F}{2}} K(x,\gamma(t_{2\cdot i - 1}))
+ 2 \sum_{i=1}^{\frac{F}{2}-1} K(x,\gamma(t_{2\cdot i})) + K(x,\gamma(t_F)) \right).
\end{gather}
The norm distance is bounded as $\| \Gamma_{\gamma} - \hat \Gamma_{\gamma} \|_H^2 = O({\color{black}h^4}).$
\end{theorem}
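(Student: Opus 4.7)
My plan is to reduce the problem to classical composite Simpson's rule error estimates for scalar integrals by unpacking the squared $H$-norm via the reproducing property.

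Using Proposition \ref{prop:integral-rep}, I would write $\Gamma_\gamma = \int_0^T K(\cdot,\gamma(t))\,dt$ as a Bochner integral in $H$, and note that $\hat\Gamma_\gamma$ is precisely composite Simpson's rule applied to this $H$-valued integrand at the nodes $t_0,\dots,t_F$. I would then expand
\begin{align*}
\|\Gamma_\gamma - \hat\Gamma_\gamma\|_H^2 = \|\Gamma_\gamma\|_H^2 - 2\langle\Gamma_\gamma,\hat\Gamma_\gamma\rangle_H + \|\hat\Gamma_\gamma\|_H^2
\end{align*}
and apply the reproducing identity $\langle K(\cdot,\gamma(s)), K(\cdot,\gamma(t))\rangle_H = K(\gamma(s),\gamma(t))$ together with Definition \ref{def:occ} to express each term as a scalar integral/quadrature applied to $F(s,t) := K(\gamma(s),\gamma(t))$. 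Denoting the exact integral and composite Simpson rule in each variable by $I_\ast,S_\ast$, the three terms become $I_sI_tF$, $I_sS_tF$, and $S_sS_tF$ respectively.

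A short calculation, using commutativity of $I_s$ with $S_t$ (a consequence of linearity of Simpson's rule) and the symmetry $F(s,t)=F(t,s)$ forced by the symmetry of $K$, collapses the linear combination to the iterated quadrature residual
\begin{align*}
\|\Gamma_\gamma - \hat\Gamma_\gamma\|_H^2 = E_sE_tF,
\end{align*}
where $E_\ast := I_\ast - S_\ast$. Applying the classical $1$D composite Simpson bound $|E[\phi]| \le \tfrac{T}{180}h^4\sup|\phi^{(4)}|$ in the variable $t$ uniformly in $s$ gives $\sup_s|E_tF(s,\cdot)| = O(h^4)$, and a crude outer estimate $|E_s[g]|\le 2T\|g\|_\infty$ on the remaining Simpson rule in $s$ then delivers the claimed $\|\Gamma_\gamma - \hat\Gamma_\gamma\|_H^2 = O(h^4)$. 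A second application of the sharp Simpson bound against the mixed partial $\partial_s^4\partial_t^4F$ would in fact sharpen this to $O(h^8)$.

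The main obstacle is verifying enough joint regularity of $F$ on $[0,T]^2$ to invoke the Simpson error formula. The hypothesis that $H$ consists of $C^4$ functions gives $C^4$-smoothness of $x\mapsto K(x,y)$ in the first slot directly, and symmetry of $K$ promotes this to the second slot, but bounding the mixed partials of $F$ requires combining this with the $C^4$-regularity of $\gamma$ and the standard RKHS fact that partial-derivative evaluation functionals remain bounded on an RKHS of sufficiently smooth functions. Once that regularity is in place, the quadrature estimate is routine.
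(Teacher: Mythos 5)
Your proposal is correct, and at its core it is the same reduction the paper uses: expand $\|\Gamma_\gamma-\hat\Gamma_\gamma\|_H^2$ into the three inner products and use the reproducing property to turn each one into an exact or Simpson-discretized integral of $F(s,t)=K(\gamma(s),\gamma(t))$ over $[0,T]^2$. Where you differ is in how the three terms are combined. The paper bounds each term's deviation from $\int_0^T\!\int_0^T F\,ds\,dt$ separately (the cross term is Simpson in one variable, the quadratic term is the two-dimensional Simpson rule) and simply adds the $O(h^4)$ errors, so no cancellation is exploited and the bound cannot beat the worst single-term error. You instead factor the combination exactly as $E_sE_tF$ with $E=I-S$, using the symmetry of $K$ to identify $S_sI_tF$ with $I_sS_tF$; then one sharp one-dimensional Simpson bound in $t$ together with the trivial estimate $|E_s[g]|\le 2T\|g\|_\infty$ already yields the claimed $O(h^4)$, and a second sharp bound against $\partial_s^4\partial_t^4F$ upgrades this to $O(h^8)$, i.e.\ $\|\Gamma_\gamma-\hat\Gamma_\gamma\|_H=O(h^4)$ — a genuinely stronger conclusion, consistent with the paper's remark that pointwise convergence is faster than the stated norm rate. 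You are also more careful than the paper about the one nontrivial hypothesis check: the Simpson error formula needs $\partial_t^4F$ (and, for the sharpening, the mixed eighth-order partial) to be continuous and bounded on $[0,T]^2$, which requires passing from the assumption that $H$ consists of $C^4$ functions to $C^4$-regularity of $K$ in each slot via boundedness of derivative-evaluation functionals, combined with $\gamma\in C^4$; the paper invokes the two-dimensional Simpson error bound without comment. Supplying that standard RKHS regularity lemma is the only piece still owed in your argument.
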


\begin{proof}
Consider $\| \Gamma_{\gamma} - \hat \Gamma_{\gamma} \|_H^2 = \langle \Gamma_\gamma, \Gamma_\gamma \rangle_H + \langle \hat\Gamma_\gamma, \hat\Gamma_\gamma \rangle_H - 2 \langle \Gamma_\gamma, \hat\Gamma_\gamma \rangle_H$. The term $\langle \hat\Gamma_\gamma, \hat\Gamma_\gamma \rangle_H$ is an implementation of the two-dimensional Simpson's rule (cf. \cite{burden2001numerical}) while $\langle \Gamma_\gamma, \Gamma_\gamma \rangle_H$ is the double integral $\int_0^T \int_0^T K(\gamma(t),\gamma(\tau)) dt d\tau.$ Thus, \[\langle \hat\Gamma_\gamma, \hat\Gamma_\gamma \rangle_H = \langle \Gamma_\gamma, \Gamma_\gamma \rangle_H + O({\color{black}h^4}).\]
Similarly, $\langle \Gamma_{\gamma},\hat \Gamma_{\gamma}\rangle_H$ integrates in one variable while implementing Simpson's rule in the other. Consequently, \[\langle \Gamma_{\gamma},\hat \Gamma_{\gamma}\rangle_H = \langle \Gamma_{\gamma}, \Gamma_{\gamma}\rangle_H + O({\color{black}h^4}).\]
The conclusion of the theorem follows. 
\end{proof}
It should be noted that the above convergence rate is for norm convergence and point-wise convergence can be demonstrated to be faster.  
 
\section{Problem Setup and Developed Algorithm}\label{setup}
Let $r:[0,T]\rightarrow \mathbb{R}^2$ represent a continuous trajectory for a mobile sensor attempting to travel in a straight line, but subject to an unknown flow field, $F:R\subset \mathbb{R}^2\rightarrow \mathbb{R}^2$, where $R$ is a compact subset of the plane. Let $\dot r = s \begin{pmatrix} \cos(\theta) & \sin(\theta) \end{pmatrix}^T + F(r)$, for a positive constant $s$, represent the true dynamics induced by the flow field. We will assume $F:R\rightarrow \mathbb{R}^2$ is locally Lipschitz in order to assure uniqueness of the solutions \cite{Wolfgang}. As the flow field is unknown, the anticipated dynamics are given as $\dot{\tilde{r}} = s \begin{pmatrix} \cos(\theta) & \sin(\theta) \end{pmatrix}^T$. After a mobile sensor has traveled through the flow field over a time period $[0,T]$, during which there is limited to no knowledge of the mobile sensor's position, the difference between the actual location of the mobile sensor, $r(T)$, and the anticipated location, $\tilde r(T)$, is given as 
\[D = r(T) - \tilde r(T) = \int_0^T \left( \dot r(t) - \dot{\tilde{r}}(t)\right) dt = \int_0^T F(r(t)) dt 
\]
Hence, $D=\langle F, \Gamma_r \rangle_H$ and the difference between the final locations of the mobile sensor describes the integral of the flow field along the trajectory $r$. This integral provides a type of tomographic sample of $F$. As the trajectory $r$ is treated as unknown, an approximation of $F$ using the sample generated by $\Gamma_{r}$ is difficult to assess directly. This motivates the developed iterative algorithm to determine the flow field, $F$, as well as the true trajectories, $r$.

Let $\{ s_i \}_{i=1}^M, \{ \theta_i \}_{i=1}^M$ be a collection of speeds and angles used to generate a collection of anticipated trajectories, i.e. trajectories governed by the dynamics $\dot{r}=s_i\begin{pmatrix}\cos(\theta_i)& \sin(\theta_i)\end{pmatrix}^\top$. Moreover, let $\{p_i\}_{i=1}^M\subset \mathbb{R}^2$ represent the starting point of the trajectories.
\begin{algorithm}
\caption{Iterative Motion Tomography Algorithm}
\begin{algorithmic}
\label{alg}
\STATE{Define $N$ as the number of iterates}
\STATE{Input: Samples $r_i(T)$ \quad $i\in\{1,\ldots,M\}$}

\STATE{Set $D_{i,0}=r_i(T)-\tilde{r}_{i,0}(T)$\quad $i\in\{1,\ldots,M\}$}
\STATE{ Set $\hat{F}_{0}=0$}\\
\FOR{$n \in \{0,\ldots,N\}$} 
\FOR{ $i\in\{1,\ldots,M\}$}
 \STATE{Generate via a numerical method $\tilde{r}_{i,n}:[0,1]\rightarrow R$ the unique solution to \[\dot{p}=s_i\begin{pmatrix}\cos(\theta_i)& \sin(\theta_i)\end{pmatrix}^\top+\hat{F}_n(p),\quad p(0)=p_i.\] }
\ENDFOR
\STATE{Set $D_{i,n}=r_i(T)-\tilde{r}_{i,n}(T)$\quad $i\in\{1,\ldots,M\}$}
\FOR{$i\in\{1,\ldots,M\}$}
\STATE{Compute $\hat{F}_{n+1}=\sum_{i=1}^M w_{i,n+1}\Gamma_{\tilde{r}_{i,n}}$ by solving
\[ 
\begin{pmatrix} \langle \Gamma_{\tilde r_{i,n}}, \Gamma_{\tilde r_{j,n}} \rangle \end{pmatrix}^{M,M}_{i,j=1}\begin{pmatrix}
w_{1,n+1} \\ \vdots \\ w_{M,n+1}
\end{pmatrix}=\begin{pmatrix}
D_{1,n} + \langle \hat F_n, \Gamma_{\tilde r_{1,n}} \rangle\\ \vdots \\ D_{M,n} + \langle \hat F_n, \Gamma_{\tilde r_{M,n}}\rangle
\end{pmatrix}.
\]}
\ENDFOR
\STATE{
Output $\hat{F}_{n+1}=\sum_{i=1}^M w_{i,n+1}\Gamma_{\tilde{r}_{i,n}}$}

\ENDFOR
\end{algorithmic}
\end{algorithm}


Significantly, after the initial data collection period, no further experiments are necessary to approximate the flow field. Specifically, Algorithm \ref{alg} only needs to produce new simulations of the approximate trajectories which ultimately converge to the true trajectories from the initial experiment.
\section{Convergence of Algorithm}
This section is devoted to establishing the necessary conditions for the above algorithm to converge. The main theoretical crux will be the contraction mapping theorem. Let $R\subset\mathbb{R}^2$ be compact, $F:R\rightarrow \mathbb{R}^2$ a Lipschitz continuous vector field, let $\{r_i\}_{i=1}^\infty$ be a countable set of trajectories, $r_i:[0,T]\rightarrow R$, satisfying 
 \[\dot{p}=s_i\begin{pmatrix}\cos(\theta_i)&\sin(\theta_i)\end{pmatrix}^\top +F(p)\quad p(0)=p_i\] 
 for some countable collection of $(s_i,\theta_i)\subset \mathbb{R}\times [0,2\pi)$, and countable set $\{p_i\}_{i=1}^\infty\subset R$. Let $H(R)$ be a reproducing kernel Hilbert space of $\mathbb{R}^2$ valued functions. 
\begin{theorem}\label{contraction mapping}
Let $\Gamma\subset H(R)$ be the subset of $\phi(x)=\begin{pmatrix}\phi_1(x)&\phi_2(x)\end{pmatrix}^\top$ with $\phi_i(x)=\sum_{j=1}^sw^i_{j,\phi}\Gamma_{r_{j,\phi}}(x)$ for $i=1,2$. We define $A:\Gamma\subset H(R)\rightarrow H(R)$ as follows: Given, $\phi(x)\in \Gamma$ define $\hat{r}_{j,\phi}$ to be the solution to
\[\dot{p}=s_j\begin{pmatrix}\cos(\theta_j)& \sin(\theta_j)\end{pmatrix}^\top+\phi(p)\quad p(0)=p_j.\]
Moreover, define $D^i_{j,\phi}:=r^i_j(T)-\hat{r}^i_{j,\phi}(T)$ for $i=1,2$. Then 
\[A(\phi)_i:=\sum_{j=1}^s\hat{w}^i_{j,\phi}\Gamma_{\hat{r}_{j,\phi}}(x)\]
where $\hat{w}^i_{j,\phi}$ are defined so that $A(\phi)_i$ satisfies
\[\langle A(\phi)_i,\Gamma_{\hat{r}_{j,\phi}}\rangle_{H}=D^i_{j,\phi}+\langle \phi, \Gamma_{\hat{r}_{j,\phi}}\rangle_{H}\]
for $i=1,2$.
There exists a closed finite diameter subspace $E\subset \Gamma$ such that $A|_{E}$ is a contraction mapping and thus extends to a contraction mapping on $H(R)$. 
\end{theorem}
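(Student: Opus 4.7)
The plan is to apply the Banach fixed-point theorem to $A$ restricted to a carefully chosen closed bounded subset $E$. The first step is to unpack the deficit. Since $r_j$ and $\hat r_{j,\phi}$ share the drift $s_j(\cos\theta_j,\sin\theta_j)^\top$, subtracting the two ODEs and integrating gives $r_j(T)-\hat r_{j,\phi}(T)=\int_0^T F(r_j(t))\,dt-\int_0^T \phi(\hat r_{j,\phi}(t))\,dt$, hence componentwise $D^i_{j,\phi}=\langle F_i,\Gamma_{r_j}\rangle_H-\langle \phi_i,\Gamma_{\hat r_{j,\phi}}\rangle_H$ by Definition \ref{def:occ}. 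Substituting into the defining linear system cancels the $\phi$ term on the right and yields the compact characterization that $A(\phi)_i$ is the unique element of $V_\phi:=\mathrm{span}\{\Gamma_{\hat r_{j,\phi}}\}_{j=1}^{s}$ satisfying $\langle A(\phi)_i,\Gamma_{\hat r_{j,\phi}}\rangle_H=\langle F_i,\Gamma_{r_j}\rangle_H$ for every $j$. In particular, if $\phi=F$ then $\hat r_{j,F}=r_j$ by uniqueness of solutions, so $A(F)$ is the orthogonal projection of $F$ onto $V_F$, identifying that projection as the natural candidate fixed point.

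Next I would build a Lipschitz chain from $\|\phi-\psi\|_H$ to $\|\Gamma_{\hat r_{j,\phi}}-\Gamma_{\hat r_{j,\psi}}\|_H$. Under the standard assumption of a bounded reproducing kernel the embedding $\|g\|_\infty\leq \sqrt{\sup_x K(x,x)}\,\|g\|_H$ transfers $H$-norm estimates to sup-norm estimates; Gronwall's inequality applied to the ODEs defining $\hat r_{j,\phi}$ and $\hat r_{j,\psi}$ then yields $\|\hat r_{j,\phi}-\hat r_{j,\psi}\|_\infty\leq T e^{LT}\|\phi-\psi\|_\infty$ uniformly for $\phi,\psi$ in a ball of uniformly Lipschitz fields. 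Composing with the integral representation in Proposition \ref{prop:integral-rep} and Lipschitz continuity of $K$ on the compact set $R$ upgrades this to $\|\Gamma_{\hat r_{j,\phi}}-\Gamma_{\hat r_{j,\psi}}\|_H=O(T)\|\phi-\psi\|_H$.

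The contraction estimate then follows by subtracting the characterizing equations for $\phi$ and $\psi$ and testing against $\Gamma_{\hat r_{j,\phi}}$, which gives
\[
\langle A(\phi)_i-A(\psi)_i,\Gamma_{\hat r_{j,\phi}}\rangle_H=\langle A(\psi)_i,\Gamma_{\hat r_{j,\psi}}-\Gamma_{\hat r_{j,\phi}}\rangle_H,
\]
bounded by $\|A(\psi)_i\|_H\cdot\|\Gamma_{\hat r_{j,\phi}}-\Gamma_{\hat r_{j,\psi}}\|_H$. This controls the $V_\phi$-projection of the difference; to pass to the full $H(R)$-norm I would decompose $A(\phi)_i-A(\psi)_i=P_{V_\phi}(A(\phi)_i-A(\psi)_i)+(I-P_{V_\phi})A(\psi)_i$ (using $A(\phi)_i\in V_\phi$) and bound the second summand by the gap between $V_\phi$ and $V_\psi$, again quantified by the previous step. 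Converting these inner-product identities into bounds on the weights $\hat w^i_{j,\phi}$ requires inverting the Gram matrix $G(\phi)_{jk}=\langle \Gamma_{\hat r_{j,\phi}},\Gamma_{\hat r_{k,\phi}}\rangle_H$, and the estimate is clean only if its smallest eigenvalue is uniformly bounded below on $E$.

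The main obstacle is exactly this uniform coercivity of $G(\phi)$; without it the weights are not Lipschitz in $\phi$ and the estimate collapses. The standing hypothesis of \emph{relatively straight trajectories} is the lever here: for $E$ a sufficiently small closed ball, each $\hat r_{j,\phi}$ stays near the straight-line reference trajectory, whose occupation kernels can be arranged to be linearly independent, and a uniform lower eigenvalue bound transfers by continuity. Once that constant is secured, combining the estimates produces a Lipschitz constant of order $T\cdot\mathrm{diam}(E)$ times fixed factors, which can be driven below $1$ by shrinking $T$ or the radius of $E$. A routine verification that $A(E)\subset E$ (bound $\|A(\phi)\|_H$ via the Gram-matrix inverse and the $\phi$-independent right-hand side $\langle F_i,\Gamma_{r_j}\rangle_H$) completes the hypotheses of the Banach fixed-point theorem and supplies the unique fixed point in $E$.
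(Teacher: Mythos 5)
Your scaffolding largely parallels the paper's own argument: your Gronwall step is Proposition \ref{diffeqnormbound}, your bound $\|\Gamma_{\hat r_{j,\phi}}-\Gamma_{\hat r_{j,\psi}}\|_H=O(T)\|\phi-\psi\|_H$ is exactly what the paper obtains in weakened form in Corollary \ref{normdiffGammas} (the double-integral expansion only yields the exponent $1/2$; the linear version is imposed there as the paper's final standing assumption, whereas your duality-plus-kernel-Lipschitz route can actually prove it), and your coercivity requirement on $G_\phi$ is met in the paper quantitatively by the Wendland-type bound of Theorem \ref{spectralbound} rather than by perturbation off a straight-line configuration. Two of your choices are genuinely different and attractive: the cancellation $D^i_{j,\phi}+\langle\phi_i,\Gamma_{\hat r_{j,\phi}}\rangle_H=\langle F_i,\Gamma_{r_j}\rangle_H$, which makes the right-hand side $\phi$-independent and exhibits the candidate fixed point as a projection of $F$ (the paper instead carries the $D+\langle\phi,\cdot\rangle$ terms through \eqref{norm1}--\eqref{norm2}), and the projection decomposition of $A(\phi)_i-A(\psi)_i$, which replaces the paper's splitting into weight-difference and kernel-difference terms in \eqref{maininequality}.

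The genuine gap is in the final contraction bookkeeping. From your own estimates the two summands are controlled by roughly $\lambda_{\min}(G_\phi)^{-1/2}\sqrt{s}\,CT\,\|A(\psi)_i\|_H\,\|\phi-\psi\|_H$ and $\bigl(\sum_j|\hat w_{j,\psi}|\bigr)CT\,\|\phi-\psi\|_H$; the factors $\|A(\psi)_i\|_H$ and the weights are of the size of the field being interpolated (they reproduce $\langle F_i,\Gamma_{r_j}\rangle_H$), not of the radius of $E$, so no factor $\operatorname{diam}(E)$ appears and shrinking the ball does not shrink the Lipschitz constant. Shrinking $T$ is worse: the coercivity constant you propose to secure by continuity from the straight-line trajectories is itself $T$-dependent and degenerates as $T\to 0$, since by Theorem \ref{spectralbound} and Corollary \ref{maxeigenvalue} the whole Gram matrix scales like $T^2$, so $\|G_\phi^{-1}\|\sim T^{-2}$ and the weights blow up. The paper's mechanism is the opposite of what you propose: the $T^2$ lower bound on $\lambda_{\min}(G_\Gamma)$ makes both terms in \eqref{estimate1}--\eqref{estimate2} proportional to $\frac{1}{T}\|\phi-\psi\|_H$, and contraction is obtained for suitably \emph{large} $T$, together with the extra standing assumptions (iterates within $\varepsilon$ of $F$, $\|\Gamma_s\|_H$ bounded independently of $T$, and the linear occupation-kernel Lipschitz bound). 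To close your argument you would need either this $T$-scaling of the spectral bound, or some other source of smallness for $\|A(\psi)_i\|_H$ and the weights; as written, the claim that the constant is of order $T\cdot\operatorname{diam}(E)$ and can be beaten by shrinking $T$ or the radius does not follow.
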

We note that Hilbert spaces have the Lipschitz extension property \cite{geo-nonlinear-analysis}, i.e. a Banach space $E$ has the Lipschitz (Contraction) extension property if every Lipschitz map on a subset of $E$ extended to all of $E$ with the same Lipschitz constant. Moreover, we will also not be proving existence of the solution, but rather that the mapping described above is a contraction in some ball of diameter less than one containing the solution and applying a version of Contraction Mapping Theorem \cite{comp-functional-analysis}to prove convergence. Furthermore, we will also be suppressing the index $i=1,2$, since this would make the proof unnecessarily messy notationally and since the proof would work over any finite size dimension. 
\begin{theorem}[Contraction Mapping Theorem]
Let $X$ be a complete metric space and let $S_0$ be a closed subset of $X$ of finite diameter. Let $P_0:S_0\rightarrow S_0$ be a contraction mapping. Then the sequence of iterates $\{x_k\}$ produced by successive iterations $x_{k+1}=P(x_k)$ converges to $x=P(x)$, the unique fixed point of $P$ in $S_0$ for any $x_0$ in $S_0$. 
\end{theorem}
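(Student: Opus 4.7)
The plan is to follow the standard Banach fixed-point argument: produce the candidate limit by showing the iterates form a Cauchy sequence, use completeness to extract a limit, check that the limit lies in $S_0$ and is fixed by $P$, and finally use the contraction inequality once more to get uniqueness. Throughout, let $c \in [0,1)$ denote a Lipschitz constant of $P$, so that $d(P(x),P(y)) \le c\, d(x,y)$ for all $x,y \in S_0$.

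First I would bound consecutive distances along the orbit. By induction on $k$, the contraction property gives $d(x_{k+1},x_k) = d(P(x_k),P(x_{k-1})) \le c\, d(x_k,x_{k-1}) \le c^{k}\, d(x_1,x_0)$. Here the finite-diameter hypothesis on $S_0$ is used to guarantee that $d(x_1,x_0)$ is a finite real number, so that the ensuing estimates make sense. Next I would show $\{x_k\}$ is Cauchy: for $m > n$, the triangle inequality and a geometric-series sum give
\[
d(x_m,x_n) \;\le\; \sum_{k=n}^{m-1} d(x_{k+1},x_k) \;\le\; d(x_1,x_0)\sum_{k=n}^{m-1} c^{k} \;\le\; \frac{c^{n}}{1-c}\, d(x_1,x_0),
\]
which tends to $0$ as $n \to \infty$. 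Because $X$ is complete and $S_0$ is closed in $X$, the sequence converges to some $x^\star \in S_0$.

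To show $x^\star$ is a fixed point, I would use that any contraction is (uniformly) continuous: from $d(P(x_k),P(x^\star)) \le c\, d(x_k,x^\star) \to 0$ it follows that $P(x_k) \to P(x^\star)$. But $P(x_k) = x_{k+1} \to x^\star$ as well, so by uniqueness of limits in a metric space $P(x^\star) = x^\star$. For uniqueness of the fixed point, suppose $y \in S_0$ also satisfies $P(y) = y$. Then $d(x^\star,y) = d(P(x^\star),P(y)) \le c\, d(x^\star,y)$, which, since $c < 1$, forces $d(x^\star,y) = 0$ and hence $y = x^\star$. Because $x_0 \in S_0$ was arbitrary, every orbit converges to this unique point.

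There is no real obstacle here since this is a classical result; the only subtlety worth explicitly noting is the role of the two structural assumptions on $S_0$. Closedness is needed precisely to ensure that the Cauchy limit $x^\star$, produced in $X$ by completeness, actually lies in $S_0$ so that $P(x^\star)$ is defined; finite diameter is used to ensure the initial distance $d(x_1,x_0)$ is finite and thereby to launch the geometric-series bound uniformly. Once these are in place the three ingredients — geometric decay of consecutive distances, completeness, and continuity of $P$ — give the statement directly, with no appeal to the RKHS structure or to the specific operator $A$ introduced earlier.
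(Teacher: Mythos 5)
Your argument is the standard Banach fixed-point proof and it is correct; note that the paper itself does not prove this statement at all --- it cites it as a known result from the literature and only uses it in the proof of Theorem \ref{contraction mapping}, so there is no in-paper proof to compare against. One small clarification: in an ordinary metric space the distance $d(x_1,x_0)$ is a finite real number by definition, so the finite-diameter hypothesis on $S_0$ is not actually needed to ``launch'' the geometric-series estimate; it is simply part of the variant of the theorem being quoted (and is what the paper leans on when it restricts $A$ to a small ball around the solution), while the closedness of $S_0$ plays exactly the role you identify, namely ensuring the Cauchy limit stays in $S_0$ where $P$ is defined.
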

 To prove Theorem \ref{contraction mapping} we establish several propositions that give inequalities relating norms to the norms of the flow fields. 
\begin{proposition}\label{diffeqnormbound}
Let $H(R)$ be the reproducing kernel Hilbert space described above. For $\phi,\psi\in H(R)$, if $r_\phi$ and $r_\psi$ are the unique solutions to the initial value problems
\[\dot{p}=s\begin{pmatrix}\cos(\theta)&\sin(\theta)\end{pmatrix}^\top +\phi(p),\quad p(0)=p\]
\[\dot{p}=s\begin{pmatrix}\cos(\theta)&\sin(\theta)\end{pmatrix}^\top +\psi(p),\quad p(0)=p\in R\]
respectively, then 
\[|r_{\phi}(t)-r_{\psi}(t)|_2\leq M\|\phi-\psi\|_H\]
for some constant $M$. 
\end{proposition}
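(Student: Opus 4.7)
The plan is to write $r_\phi - r_\psi$ as an integral, split the integrand using the intermediate quantity $\phi(r_\psi(\tau))$, and apply a standard Gr\"onwall argument. Specifically, since both trajectories start at the same point $p$, subtracting the two integral equations gives
\[
r_\phi(t) - r_\psi(t) = \int_0^t \bigl[\phi(r_\phi(\tau)) - \phi(r_\psi(\tau))\bigr]\, d\tau + \int_0^t \bigl[\phi(r_\psi(\tau)) - \psi(r_\psi(\tau))\bigr]\, d\tau,
\]
and applying the triangle inequality reduces the problem to controlling these two pieces separately.

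The first term is handled by showing that every $\phi \in H(R)$ is Lipschitz on $R$, with a Lipschitz constant controlled by $\|\phi\|_H$. This follows from the RKHS structure: for each component, $|\phi(x) - \phi(y)| = |\langle \phi, k_x - k_y\rangle_H| \leq \|\phi\|_H \cdot \|k_x - k_y\|_H$, and $\|k_x - k_y\|_H^2 = K(x,x) - 2K(x,y) + K(y,y)$ can be bounded by $C_K |x-y|_2^2$ when $K$ is sufficiently smooth (which the paper already assumes for Simpson's rule convergence). This yields $|\phi(r_\phi(\tau)) - \phi(r_\psi(\tau))| \leq L_\phi |r_\phi(\tau) - r_\psi(\tau)|_2$ for a constant $L_\phi$ proportional to $\|\phi\|_H$.

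The second term is the core use of the RKHS: by the reproducing property, $|\phi(x) - \psi(x)| = |\langle \phi - \psi, k_x\rangle_H| \leq \|\phi - \psi\|_H \cdot \sqrt{K(x,x)}$, and since $R$ is compact and $K$ is continuous, $\sup_{x \in R}\sqrt{K(x,x)} =: \kappa < \infty$. Hence the second integral is bounded by $T\kappa \|\phi - \psi\|_H$. Combining and applying Gr\"onwall's inequality gives
\[
|r_\phi(t) - r_\psi(t)|_2 \leq T\kappa\, e^{L_\phi t}\, \|\phi - \psi\|_H,
\]
which is the desired inequality with $M = T\kappa\, e^{L_\phi T}$.

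The main obstacle is that the resulting $M$ depends on $\|\phi\|_H$ through $L_\phi$, so it is not uniform over all of $H(R)$. This is acceptable for the proposition as stated (the constant $M$ may depend on $\phi$), but for the downstream contraction argument in Theorem~\ref{contraction mapping} we will eventually need to restrict to the bounded subspace $E$ on which $\|\phi\|_H$ is controlled, so that $M$ becomes uniform. I would flag this dependence explicitly so it can be invoked cleanly in the contraction proof.
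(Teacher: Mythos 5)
Your proposal is correct and follows essentially the same route as the paper's proof: split $\phi(r_\phi)-\psi(r_\psi)$ by adding and subtracting $\phi(r_\psi)$, bound the first piece by a Lipschitz estimate on $\phi$ and the second by the reproducing property via $\sqrt{K(x,x)}$, then conclude with a Gr\"onwall argument. Your write-up is in fact more careful than the paper's (which invokes the mean value theorem with $\|\nabla\phi\|_2$ and states the Gr\"onwall step only implicitly), and your observation that $M$ depends on $\|\phi\|_H$ and must be made uniform on the bounded set used in Theorem~\ref{contraction mapping} is a legitimate point the paper leaves unstated.
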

\begin{proof}
We have that 
\begin{align*}
|\dot{r}_\phi-\dot{r}_\psi|=|\phi(r_\phi(t))-\psi(r_{\psi}(t))|_2&\leq |\phi(r_\phi(t))-\phi(r_{\psi}(t))|_2+|\phi(r_\psi(t))-\psi(r_{\psi}(t))|_2\\
    &\leq \|\nabla\phi\|_2\cdot|r_\phi(t)-r_\psi(t)|+\|\phi-\psi\|_H\sqrt{K(r_\psi(t),r_\psi(t))} 
\end{align*}
by an application of mean value theorem. It follows that,
\[|r_\phi(t)-r_\psi(t)|\leq \exp(\nabla\phi)|r_\phi(0)-r_\psi(0)|+\|\phi-\psi\|_H\cdot M=\|\phi-\psi\|_H\cdot M
\]
for some $M$. 
\end{proof}
The proof of the following proposition can be found in \cite{rosenfeld2019occupation}, but is presented here for clarity. 
\begin{proposition}\label{pathdiff}
Suppose $H$ is a RKHS over a set $X$ consisting of continuous functions and let $\gamma_1(t)$ and $\gamma_2(t)$ be two trajectories with homotopy $\{\gamma_s(t)\}$. The map 
$s\mapsto \Gamma_{\gamma_s}$
is continuous. 
\end{proposition}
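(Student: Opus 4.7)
The plan is to reduce $\|\Gamma_{\gamma_s}-\Gamma_{\gamma_{s_0}}\|_H$ to a kernel integral and then exploit compactness of the homotopy image together with uniform continuity of $K$. First I would expand the squared norm difference as
\[
\|\Gamma_{\gamma_s} - \Gamma_{\gamma_{s_0}}\|_H^2 = \langle \Gamma_{\gamma_s}, \Gamma_{\gamma_s}\rangle_H - 2\langle \Gamma_{\gamma_s}, \Gamma_{\gamma_{s_0}}\rangle_H + \langle \Gamma_{\gamma_{s_0}}, \Gamma_{\gamma_{s_0}}\rangle_H,
\]
and then rewrite each of the three inner products as a double kernel integral. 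Using the defining property of the occupation kernel with $g=\Gamma_{\gamma_{s'}}$ together with Proposition \ref{prop:integral-rep} gives
\[
\langle \Gamma_{\gamma_s}, \Gamma_{\gamma_{s'}}\rangle_H = \int_0^T \Gamma_{\gamma_{s'}}(\gamma_s(t))\,dt = \int_0^T\!\int_0^T K(\gamma_s(t),\gamma_{s'}(\tau))\,d\tau\,dt,
\]
so only the kernel evaluated along the trajectories appears.

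Next I would use the fact that a homotopy $\{\gamma_s(t)\}$ is by definition a jointly continuous map on a compact parameter product, so its image $C=\{\gamma_s(t) : s\in[0,1],\ t\in[0,T]\}$ is a compact subset of $X$. Since $K$ is the kernel of a RKHS of continuous functions (and in the settings of interest here is continuous on $X\times X$), $K$ is uniformly continuous on $C\times C$. Coupled with joint continuity of $(s,t)\mapsto \gamma_s(t)$, this produces
\[
\sup_{t,\tau\in[0,T]} \bigl| K(\gamma_s(t),\gamma_{s'}(\tau)) - K(\gamma_{s_0}(t),\gamma_{s_0}(\tau)) \bigr| \longrightarrow 0 \quad \text{as } s,s'\to s_0,
\]
and so each of the three double integrals above converges to the common value $\langle \Gamma_{\gamma_{s_0}},\Gamma_{\gamma_{s_0}}\rangle_H$, which forces $\|\Gamma_{\gamma_s}-\Gamma_{\gamma_{s_0}}\|_H^2\to 0$.

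The main obstacle is the uniform-continuity step: a RKHS of continuous functions does not automatically have a jointly continuous reproducing kernel. In practice this is handled either by explicitly assuming continuity of $K$ on $X\times X$ (which holds for every kernel actually used in the paper), or equivalently by requiring that the feature map $x\mapsto k_x$ be continuous as a map into $H$; either hypothesis immediately yields uniform continuity on the compact set $C\times C$, and the rest of the argument is a routine dominated-convergence/uniform-convergence interchange of limit and integration.
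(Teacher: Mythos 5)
Your proposal is correct and follows essentially the same route as the paper's proof: expand $\|\Gamma_{\gamma_{s}}-\Gamma_{\gamma_{s_0}}\|_H^2$ into double integrals of kernel differences along the trajectories, then use compactness of the homotopy's image and uniform continuity of $K$ (and of $(s,t)\mapsto\gamma_s(t)$) to drive those integrals to zero. Your remark that joint continuity of $K$ must be assumed (rather than following from $H$ consisting of continuous functions) is a fair point; the paper simply invokes continuity of $K(\cdot,\cdot)$ in its proof without listing it as a hypothesis.
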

\begin{proof}
As $[0,T] \times [0,1]$ is compact, the map $(t,s) \mapsto \gamma_s(t)$ is uniformly continuous. That is for every $\varepsilon > 0$ there exists a $\delta > 0$ such that whenever $\|(t_1,s_1)-(t_s,s_2)\|_2 < \delta$, $\| \gamma_{s_1}(t_1) - \gamma_{s_2}(t_2) \|_2 < \epsilon.$ Moreover, as $K(\cdot,\cdot)$ is continuous, and the image of $\gamma_{s}(t)$ is compact, the map $(s_1,t,s_2,\tau) \mapsto K(\gamma_{s_1}(t),\gamma_{s_2}(\tau))$ is uniformly continuous.

Fix $\varepsilon > 0$ and select $\delta > 0$ such that 
\begin{align*}
    |K(\gamma_{s_1}(t),\gamma_{s_1}(\tau)) - K(\gamma_{s_2}(t),\gamma_{s_1}(\tau))|& < \frac{\varepsilon}{2T^2} \quad \text{ and}\\
    |K(\gamma_{s_2}(t),\gamma_{s_2}(\tau)) - K(\gamma_{s_2}(t),\gamma_{s_1}(\tau))|& < \frac{\varepsilon}{2T^2}
\end{align*}
whenever $|s_1 - s_2| < \delta$. Select $s_1, s_2$ such that $|s_1 - s_2| < \delta$, then
\begin{align}\label{homocontinuity}
\|\Gamma_{s_1} - \Gamma_{s_2} \|_H^2 &=  \langle \Gamma_{s_1}, \Gamma_{s_1} \rangle + \langle \Gamma_{s_2}, \Gamma_{s_2} \rangle - 2 \langle \Gamma_{s_1}, \Gamma_{s_2} \rangle  \\
&= \int_0^T \int_0^T ( K(\gamma_{s_1}(t),\gamma_{s_1}(\tau)) - K(\gamma_{s_2}(t),\gamma_{s_1}(\tau)) dt d\tau\nonumber \\
&+\int_0^T \int_0^T K(\gamma_{s_2}(t),\gamma_{s_2}(\tau)) - K(\gamma_{s_2}(t),\gamma_{s_1}(\tau)) ) dt d\tau,\nonumber 
\end{align}
Note that Equation \eqref{homocontinuity} is positive and bounded by $\varepsilon$ by construction. Hence, the map $s \mapsto \Gamma_{s}$ is continuous.
\end{proof}
\begin{corollary}\label{normdiffGammas}Following the notational convention defined in Proposition \ref{pathdiff}, if $|r_{\phi}(t)-r_{\psi}(t)|\leq M\|\phi-\psi\|_H$ for some $M$ then $\|\Gamma_{r_{\phi}}-\Gamma_{r_{\psi}}\|\leq KT^2\|\phi-\psi\|^{1/2}_H$ as well for some constant $C$ independent of $T$.
\end{corollary}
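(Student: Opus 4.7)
The plan is to mirror the computation carried out in the proof of Proposition \ref{pathdiff}, but this time tracking the dependence on $\|\phi - \psi\|_H$ quantitatively rather than merely qualitatively. First I would expand
\begin{equation*}
\|\Gamma_{r_\phi} - \Gamma_{r_\psi}\|_H^2 = \langle \Gamma_{r_\phi}, \Gamma_{r_\phi}\rangle_H + \langle \Gamma_{r_\psi}, \Gamma_{r_\psi}\rangle_H - 2\langle \Gamma_{r_\phi}, \Gamma_{r_\psi}\rangle_H,
\end{equation*}
then use Proposition \ref{prop:integral-rep} to rewrite each inner product as a double integral of $K$ along the relevant trajectories. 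Regrouping the three terms yields a sum of two double integrals whose integrands are differences of the form $K(r_\phi(t),r_\phi(\tau)) - K(r_\phi(t),r_\psi(\tau))$ and $K(r_\psi(t),r_\psi(\tau)) - K(r_\phi(t),r_\psi(\tau))$, i.e.\ kernel differences disagreeing in exactly one slot.

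Next I would exploit Lipschitz continuity of $K$ on the compact set $R \times R$. For the smooth kernels typically used in practice (Gaussian, exponential, etc.), $K$ is in fact $C^1$ on $R \times R$, so some constant $L$ satisfies $|K(a,b_1) - K(a,b_2)| \leq L|b_1 - b_2|$, and symmetrically in the first slot. Combining this with the standing hypothesis $|r_\phi(t) - r_\psi(t)| \leq M\|\phi-\psi\|_H$ supplied by Proposition \ref{diffeqnormbound} bounds each integrand uniformly by $LM\|\phi-\psi\|_H$. Integrating this uniform estimate over $[0,T]^2$ in both terms yields
\begin{equation*}
\|\Gamma_{r_\phi} - \Gamma_{r_\psi}\|_H^2 \leq 2LMT^2 \|\phi - \psi\|_H,
\end{equation*}
and taking square roots produces the asserted inequality, with the exponent $1/2$ on $\|\phi-\psi\|_H$ emerging naturally from the square root.

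The main obstacle I anticipate is justifying the Lipschitz estimate on $K$ itself: mere continuity on the compact set $R \times R$ only yields uniform continuity, which would degrade the estimate to one involving a modulus of continuity of $K$ rather than a clean power of $\|\phi-\psi\|_H$. In the write-up one should therefore either assume $K$ is Lipschitz on $R \times R$ (automatic for the $C^1$ kernels used in the experiments) or state the result with a modulus-of-continuity factor. A secondary bookkeeping issue is the exponent on $T$: my calculation produces a factor of $T$ outside the square root rather than $T^2$, so one should verify whether the $T^2$ in the statement absorbs additional $T$-dependence hidden in $M$ (which, from the Gr\"onwall-type argument in Proposition \ref{diffeqnormbound}, can itself grow with $T$) so that the remaining constant is genuinely $T$-independent.
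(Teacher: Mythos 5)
Your argument is essentially the paper's own proof: the authors expand $\|\Gamma_{r_\phi}-\Gamma_{r_\psi}\|_H^2$ into the same two double integrals of one-slot kernel differences and bound them by the mean value theorem (i.e.\ $|\nabla_y K|$ at an intermediate point, which is exactly your Lipschitz bound for a $C^1$ kernel) together with Proposition \ref{diffeqnormbound}, arriving at $\|\Gamma_{r_\phi}-\Gamma_{r_\psi}\|_H^2\leq CT^2\|\phi-\psi\|_H$. Your closing observation is also on target: the paper's proof likewise yields only a factor of $T$ (not $T^2$) after taking square roots, consistent with the $\sqrt{C}\,T\|\phi-\psi\|_H^{1/2}$ used later in estimate \eqref{estimate1}, so the $T^2$ in the corollary statement is a slip in the paper rather than a gap in your argument.
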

\begin{proof}
\begin{align}\label{homo-equality}
\| \Gamma_{r_\phi} - \Gamma_{r_\psi} \|_H^2 &=  \langle \Gamma_{r_\phi}, \Gamma_{r_\phi} \rangle + \langle \Gamma_{r_\psi}, \Gamma_{r_\psi} \rangle - 2 \langle \Gamma_{r_\phi}, \Gamma_{r_\psi} \rangle  \\
&= \int_0^T \int_0^T  K({r_\phi}(t),{r_\phi}(\tau)) - K({r_\psi}(t),{r_\phi}(\tau))\,dt\, d\tau\nonumber \\
&+\int_0^T \int_0^T  K({r_\psi}(t),{r_\psi}(\tau)) - K({r_\psi}(t),{r_\phi}(\tau))  \,dt\, d\tau\nonumber \\
&\leq \int_0^T \int_0^T \left(|\nabla_y K(r_\phi(t),\xi_\psi)|\right)\cdot|r_\phi(\tau)-r_\psi(\tau)|\, dt\, d\tau\nonumber\\
&+\int_0^T \int_0^T \left(|\nabla_y K(r_\psi(t),\xi_\phi)|\right)\cdot|r_\phi(\tau)-r_\psi(\tau)|\, dt\, d\tau\nonumber\\
&\leq CT^2\|\phi-\psi\|_H\nonumber.
\end{align}
The last two inequalities are due to an application of mean value theorem and proposition \ref{diffeqnormbound}.
\end{proof}

In order to talk about the stability of the interpolation process we define the following:
\begin{definition}
The trajectory separation distance is given by
\[q_{X,T}:=\frac{1}{2}\min_{\stackrel{j\neq k}{t,\tau\in[0,T] }}\|\gamma_j(t)-\gamma_k(\tau)\|_2\]
\end{definition}
\noindent This is the maximum radius such that all ``tubes" centered on the trajectories are disjoint. 

The next theorem will be a trajectory variant of a theorem found in Wendland \cite{Wendland}. In Wendland, a bound for the minimum eigenvalue is obtained by expressing the kernel function $\Phi(x-y)$ in terms of its Fourier transform and comparing with an intermediate radial functions $\Psi_M$ based on characteristic functions for balls of radius $M$. This comparison is given in terms of the separation distance between centers. By defining the trajectory separation distance as above the inequalities found in Wendland remain valid since the points in the image of each trajectory are separated by at least $q_{X,T}$. 

In the following theorem let $G_\Gamma$ be the Occupation kernel Grammian, i.e. for a set of trajectories, $r_i:[0,T]\rightarrow \mathbb{R}^2$, with $1\leq i\leq N$
\[
G_\Gamma:=\left(\langle \Gamma_{r_j}, \Gamma_{r_k}\rangle\right)_{j,k=1}^N
=\int_0^T\int_0^T\left(K(r_j(t),r_k(\tau))\right)_{j,k=1}^N\, dt\,d\tau.
\]
\begin{theorem}\label{spectralbound}
With $\Phi(x-y)=K(x,y)=\exp(-\mu\|x-y\|_2^2)$ the minimal eigenvalue of the Occupation kernel Grammian is bounded by
\[\lambda_{\min(G_\Gamma)}\geq \frac{C_2}{2\mu}\frac{\exp(-M_2^2/(q_{X,T}^2\mu))T^2}{q_{X,T}^2}\]
with $M_2=12\sqrt[3]{\frac{\pi}{9}}$ and $C_2=\frac{M^2}{16}$.
\end{theorem}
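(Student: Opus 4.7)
The plan is to adapt Wendland's spectral bound for interpolation matrices to the occupation kernel Grammian, using the Fourier representation of the Gaussian kernel. First I would express the quadratic form $\alpha^{T} G_{\Gamma}\alpha$ via Plancherel: substituting the identity $K(x-y) = \int_{\mathbb{R}^{2}} \hat{K}(\omega) e^{i\omega\cdot(x-y)}\,d\omega$, with $\hat{K}(\omega)$ a constant multiple of $e^{-\|\omega\|_{2}^{2}/(4\mu)}$, into the double integral defining $\langle \Gamma_{r_{j}},\Gamma_{r_{k}}\rangle$ yields $\alpha^{T} G_{\Gamma}\alpha = \int_{\mathbb{R}^{2}} \hat{K}(\omega)\,\bigl|\sum_{j=1}^{N}\alpha_{j} V_{j}(\omega)\bigr|^{2}\,d\omega$, where $V_{j}(\omega) := \int_{0}^{T} e^{i\omega\cdot r_{j}(t)}\,dt$. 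Bounding $\lambda_{\min}(G_{\Gamma})$ from below thus reduces to a quantitative lower bound on this frequency-domain integral, which in the point-centered case of Wendland amounts to replacing $V_{j}(\omega)$ by $e^{i\omega\cdot x_{j}}$.

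Next I would import Wendland's compactly supported test-function machinery essentially verbatim. In dimension two, Wendland constructs a radial function $\Psi_{M}$, built from characteristic functions of Euclidean balls and their convolution squares, whose Fourier transform is nonnegative, supported in a controlled region, and dominated by $\hat{K}$ on a band of frequencies determined by $M$. The separation distance enters through the calibration $M = M_{2}/q_{X,T}$, chosen so that $\Psi_{M}\ast\Psi_{M}$ is supported in the ball of radius $2q_{X,T}$. In the trajectory setting the key geometric input is precisely the trajectory separation distance: for $j\neq k$ the tubes of radius $q_{X,T}$ around $r_{j}$ and $r_{k}$ are disjoint, so $r_{j}(t)-r_{k}(\tau)$ never enters the support of $\Psi_{M}\ast\Psi_{M}$ and all cross terms in $\sum_{j,k}\alpha_{j}\alpha_{k}(\Psi_{M}\ast\Psi_{M})(r_{j}(t)-r_{k}(\tau))$ with $j\neq k$ vanish. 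The remaining diagonal terms contribute a factor of $T^{2}$ after integration over $[0,T]^{2}$, which is the origin of the $T^{2}$ in the numerator of the claimed bound.

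Combining the two ingredients via Parseval and inserting the Gaussian Fourier transform then yields the bound of the stated form. The scalar constants $M_{2} = 12\sqrt[3]{\pi/9}$ and $C_{2} = M_{2}^{2}/16$ are exactly those produced by Wendland's optimization of the test function in dimension $d=2$, while the factor $\exp(-M_{2}^{2}/(q_{X,T}^{2}\mu))/q_{X,T}^{2}$ amounts to $\hat{K}$ evaluated at the cutoff frequency $M_{2}/q_{X,T}$, divided by $q_{X,T}^{d}$ with $d=2$, as in Wendland's original statement.

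The principal obstacle I anticipate is making rigorous the assertion that Wendland's point-sampling inequality carries over without essential change when point evaluations $e^{i\omega\cdot x_{j}}$ are replaced by integrated evaluations $V_{j}(\omega)$. The low-frequency behavior $V_{j}(0)=T$ supplies the $T^{2}$ factor, but one must verify that disjointness of the tubes around the trajectories plays precisely the same role as disjointness of balls of radius $q_{X}$ around point centers, and that there is no destructive interference among the $V_{j}(\omega)$ at intermediate frequencies that would weaken the bound. The authors' remark that the trajectory points are separated by at least $q_{X,T}$ is the crucial observation, and the main technical work is to turn it into a pointwise majorization argument at the level of the test function, so that Wendland's optimization of $\Psi_{M}$ transfers directly and produces the same constants $M_{2}$ and $C_{2}$.
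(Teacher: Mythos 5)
Your opening step is exactly the paper's: writing $\alpha^{\top}G_{\Gamma}\alpha=\int_{\mathbb{R}^{2}}\hat K(\omega)\bigl|\sum_{j}\alpha_{j}V_{j}(\omega)\bigr|^{2}\,d\omega$ with $V_{j}(\omega)=\int_{0}^{T}e^{i\omega^{\top}r_{j}(t)}\,dt$, and you correctly identify the point-to-trajectory transfer as the delicate issue. The gap is in how you import Wendland. You posit a comparison function $\Psi_{M}$ that is compactly supported in \emph{physical} space (support radius $2q_{X,T}$, so that tube disjointness kills every cross term $j\neq k$ identically) while its Fourier transform is nonnegative and dominated by $\hat K$. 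These two requirements are incompatible: the convolution square of a ball indicator has Fourier transform $\propto|\hat\chi_{B}(\omega)|^{2}$, which decays only like $\|\omega\|_{2}^{-(d+1)}$, and more generally (Hardy's uncertainty principle / Paley--Wiener) no nonzero compactly supported function has a Fourier transform majorized everywhere by a Gaussian $e^{-\|\omega\|_{2}^{2}/(4\mu)}$. Wendland's construction, which the paper follows, is the dual of what you describe: it is $\hat\psi_{M}$ that is the rescaled convolution square of the indicator of the ball of radius $M$ in \emph{frequency} space, so domination by $\hat\varphi$ only needs to hold on $\{\|\omega\|_{2}\le 2M\}$ and is arranged via $\varphi_{0}(M)=\inf_{\|\omega\|_{2}\le 2M}\hat\varphi(\omega)$; the physical-space function $\psi_{M}(x)\propto\|x\|_{2}^{-d}J_{d/2}^{2}(M\|x\|_{2})$ is \emph{not} compactly supported, cross terms do not vanish, and the separation distance enters through a packing/summation bound $\max_{j}\sum_{k\neq j}|\psi_{M}(x_{j}-x_{k})|\le\tfrac12\psi_{M}(0)$ under the calibration $M=M_{2}/q_{X,T}$. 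The constants $M_{2}=12\sqrt[3]{\pi/9}$ and $C_{2}=M_{2}^{2}/16$ come precisely from this Bessel-function bound and the value $\psi_{M}(0)$; your route, using a different test function and a support-disjointness mechanism, would not reproduce them.

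Even granting your construction, the claim that "the remaining diagonal terms contribute a factor of $T^{2}$" does not hold: with $\Psi_{M}$ supported in a ball of radius $2q_{X,T}$, the diagonal contribution is $\sum_{j}\alpha_{j}^{2}\int_{0}^{T}\int_{0}^{T}\Psi_{M}(r_{j}(t)-r_{j}(\tau))\,dt\,d\tau$, and since $r_{j}(t)-r_{j}(\tau)$ leaves the support once $|t-\tau|$ exceeds a dwell time of order $q_{X,T}$ divided by the vehicle speed, this integral scales like $T$ times that dwell time, not like $T^{2}\Psi_{M}(0)$. In the paper the $T^{2}$ arises differently: the Wendland-type pointwise bound $\sum_{j,k}\alpha_{j}\alpha_{k}\psi_{M}(r_{j}(t)-r_{k}(\tau))\ge\|\alpha\|_{2}^{2}\psi_{M}(0)/2$, valid because for $j\neq k$ any pair $r_{j}(t)$, $r_{k}(\tau)$ is separated by at least $2q_{X,T}$ so the summation bound applies uniformly in $(t,\tau)$, is integrated over $[0,T]^{2}$. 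So the two load-bearing steps of your plan --- cross terms annihilated by compact support, and a $T^{2}$ diagonal --- both need to be replaced by the frequency-side compact support plus separation-distance summation argument that the paper actually uses.
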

\begin{proof}
Eventually, we will have $d=2$ but for now we will write generally. 
By linearity, for $\alpha=(\alpha_1, \ldots, \alpha_N)^\top$, 
given, 
\begin{equation}\label{eq:fourier}
    \alpha^\top G_{\varphi(r)}\alpha =\sum_{j,k=1}^N\alpha_j\alpha_k\varphi(r_j(t)-r_k(\tau))
    =\int_{\mathbb{R}^d}\sum_{j,k=1}^N\alpha_j\alpha_ke^{i\omega^\top(r_j(t)-r_k(t))}\hat{\varphi}(\omega)\, d\omega\nonumber
\end{equation}
by linearity an application of Fubini we have,
\begin{align}\label{eq: G_Gammainnerprod}
\alpha^\top G_\Gamma\alpha &=\int_0^T\int_0^T \alpha^\top G_{\varphi(r)}\alpha \, dt\, d\tau\\
&=\int_0^T\int_0^T \int_{\mathbb{R}^d} \sum_{j,k=1}^N\alpha_j\alpha_ke^{i\omega^\top(r_j(t)-r_k(\tau))}\hat{\varphi}(\omega)\, d\omega\nonumber\\
&=\int_{\mathbb{R}^d}\left|\sum_{j=1}^N\alpha_j \int_0^Te^{i\omega^\top r_j(t)}\,dt\right|^2\hat{\varphi}(\omega)\,d\omega\nonumber
\end{align}
where \eqref{eq:fourier} is valid whenever $\varphi:\mathbb{R}^d\rightarrow \mathbb{C}$ is a continuous and slowly increasing function possessing a non-negative generalized Fourier transform. 
Define, 
\[\psi_M(x)=\frac{\varphi_0(M)\Gamma\left(\frac{d}{2}+1\right)}{2^{d/2}}\|x\|_2^{-d}J_{d/2}^2(M\|x\|_2)
\]
where $J_\nu$ is the Bessel function of the first kind and \[\varphi_0(M)=\inf_{\|\omega\|_2\leq 2M}\hat{\varphi}(\omega).\]
In Wendland \cite{Wendland} it is established that for a collection of points $x_i$, $i=1,\ldots N$,  
\begin{align*}
\int_{\mathbb{R}^d}\sum_{j,k=1}^N\alpha_j\alpha_ke^{i\omega^\top(x_j-x_k)}\hat{\psi}_M(\omega)\, d\omega &=\sum_{j,k=1}^N\alpha_j\alpha_k\psi_M(x_j-x_k)\\
&\geq \|\alpha\|_2^2\left(\psi_M(0)-\max_{1\leq j\leq N}\sum_{k=1, k\neq j}^N |\psi_M(x_j-x_k)|\right)
\end{align*}
and that for the given value of $M$, 
\[\max_{1\leq j\leq N}\sum_{k=1, k\neq j}^N |\psi_M(x_j-x_k)|\leq \frac{1}{2}\psi_M(0).\]
Thus, for the stated value of $M$ we have that,
\begin{equation}\label{eq: Wendlandineq}
\sum_{j,k=1}^N\alpha_j\alpha_k\psi_M(x_j-x_k)\geq \|\alpha\|_2^2\frac{\psi_M(0)}{2}.
\end{equation}
Now, the key insight is that the establishment of \ref{eq: Wendlandineq} is unaffected by letting $x_j=r_j(t), x_k=r_k(\tau)$ for $t,\tau\in[0,T]$ by replacing the standard separation distance $q_X$ with $q_{X,T}$ and that this holds over $[0,T]$. 

Our auxiliary function $\psi_M$ is designed so that $\hat{\psi}_M(\omega)\leq \hat{\varphi}(\omega)$, giving us
\begin{equation}\label{eq:phi-psi ineq}
\int_{\mathbb{R}^d}\left|\sum_{j=1}^N\alpha_j \int_0^Te^{i\omega^\top r_j(t)}\,dt\right|^2\hat{\varphi}(\omega)\,d\omega
\geq \int_{\mathbb{R}^d}\left|\sum_{j=1}^N\alpha_j \int_0^Te^{i\omega^\top r_j(t)}\,dt\right|^2\hat{\psi}_M(\omega)\,d\omega.
\end{equation}
By combining equations, \eqref{eq:fourier}, \eqref{eq: G_Gammainnerprod}, \eqref{eq: Wendlandineq}, and \eqref{eq:phi-psi ineq}
we get
\begin{align*}
\alpha^\top G_\Gamma\alpha&=\int_0^T\int_0^T \alpha^\top G_{\varphi(r)}\alpha \, dt\, d\tau=\int_0^T\int_0^T \int_{\mathbb{R}^d} \sum_{j,k=1}^N\alpha_j\alpha_ke^{i\omega^\top(r_j(t)-r_k(\tau))}\hat{\varphi}(\omega)\, d\omega\\
&\geq \int_0^T\int_0^T\|\alpha\|_2^2\frac{\psi_M(0)}{2}\, dt\, d\tau.
\end{align*}
Finally, 
\[\lambda_{\min(G_\Gamma)}=\inf\frac{\alpha^\top G_\Gamma\alpha}{\|\alpha\|_2^2}\geq \frac{C_2}{2\mu}\frac{\exp(-M_2^2/(q_{X,T}^2\mu))T^2}{q_{X,T}^2}\]
with $M_2=12\sqrt[3]{\frac{\pi}{9}}$ and $C_2=\frac{M_2^2}{16}$ and using the bounds for $\psi_M(0)$ established in \cite{Wendland}.
\end{proof}
 As a corollary to the above analysis we show that the condition number of our interpolation matrices is at least as good as the standard Gaussian kernels. 
\begin{corollary}\label{maxeigenvalue}
If \[G_\Gamma=\left(\langle \Gamma_{r_j}, \Gamma_{r_k}\rangle\right)_{j,k=1}^N=\int_0^T\int_0^T\left(K(r_j(t),r_k(\tau))\right)_{j,k=1}^N\, dt\,d\tau,\]
then 
\[\lambda_{\max}(G_\Gamma)\leq NT^2 \Phi(0)\]
where $\Phi(x_i-x_j)=K(x_j,x_i)$. 
\end{corollary}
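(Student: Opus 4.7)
The plan is to exploit two elementary facts: that $G_\Gamma$ is a Gram matrix (hence positive semi-definite), and that the kernel $K$ is pointwise bounded by its diagonal value $\Phi(0)$. First, I would observe that for any vector $\alpha \in \mathbb{R}^N$,
\[
\alpha^\top G_\Gamma \alpha = \left\langle \sum_{j=1}^N \alpha_j \Gamma_{r_j}, \sum_{k=1}^N \alpha_k \Gamma_{r_k} \right\rangle_H \geq 0,
\]
so all eigenvalues of $G_\Gamma$ are nonnegative. Consequently $\lambda_{\max}(G_\Gamma) \leq \operatorname{tr}(G_\Gamma) = \sum_{j=1}^N \langle \Gamma_{r_j}, \Gamma_{r_j}\rangle_H$, reducing the problem to bounding the diagonal entries uniformly.

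Next, I would bound each diagonal entry. By the integral representation of the inner product of occupation kernels,
\[
\langle \Gamma_{r_j}, \Gamma_{r_j}\rangle_H = \int_0^T \int_0^T K(r_j(t), r_j(\tau))\, dt\, d\tau.
\]
To bound the integrand, apply Cauchy--Schwarz in the RKHS to the reproducing kernel functions: $|K(x,y)| = |\langle k_y, k_x\rangle_H| \leq \sqrt{K(x,x)\,K(y,y)}$. Since $K$ is translation-invariant with $K(x,y) = \Phi(x-y)$, we have $K(x,x) = \Phi(0)$ for every $x \in R$, whence $|K(x,y)| \leq \Phi(0)$ uniformly. Therefore $\langle \Gamma_{r_j}, \Gamma_{r_j}\rangle_H \leq T^2 \Phi(0)$ for each $j$.

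Summing over $j = 1,\ldots,N$ yields $\operatorname{tr}(G_\Gamma) \leq N T^2 \Phi(0)$, and combined with the trace bound on the maximal eigenvalue of a positive semi-definite matrix this gives $\lambda_{\max}(G_\Gamma) \leq N T^2 \Phi(0)$, as required. There is no real obstacle here; the argument is a routine combination of positive semi-definiteness with the standard pointwise bound on a reproducing kernel, and the only point requiring care is justifying $|K(x,y)| \leq \Phi(0)$, for which either Cauchy--Schwarz in the RKHS or the fact that the Gaussian (or any positive-definite translation-invariant) $\Phi$ attains its maximum at the origin suffices.
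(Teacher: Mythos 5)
Your proof is correct, but it takes a different route from the paper. The paper bounds $\lambda_{\max}(G_\Gamma)$ via Gershgorin's circle theorem (equivalently, the maximal absolute row sum), bounding every entry $\int_0^T\int_0^T K(r_j(t),r_k(\tau))\,dt\,d\tau$ by $T^2\Phi(0)$ using the standard fact that a positive semi-definite function satisfies $|\Phi(x)|\le\Phi(0)$; multiplying by the number of columns $N$ gives $NT^2\Phi(0)$. You instead exploit the Gram structure explicitly: $G_\Gamma\succeq 0$, so $\lambda_{\max}(G_\Gamma)\le\operatorname{tr}(G_\Gamma)$, and then you only need to bound the \emph{diagonal} entries, again via $|K(x,y)|\le\sqrt{K(x,x)K(y,y)}=\Phi(0)$ (Cauchy--Schwarz in the RKHS, which is of course the same underlying fact the paper invokes for $\Phi$). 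Both arguments are elementary and land on the identical constant $NT^2\Phi(0)$; yours buys a slightly cleaner justification since positive semi-definiteness is automatic for a Gram matrix and no off-diagonal entries need to be controlled, while the paper's Gershgorin argument does not require positive semi-definiteness of the matrix itself (only the entrywise bound) and is the template the paper reuses later for $\|G_\phi-G_\psi\|$, where the matrix in question is a difference of Gramians and the trace argument would not apply as directly.
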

\begin{proof}
By a standard application of Gershgorin's circle theorem on eigenvalues,
\[\lambda_{\max}(G_\Gamma)\leq N \max_{i,j=1,\ldots N}|\Phi(r_j(t)-r_k(\tau))|\leq N\int_0^T\int_0^T |\Phi(0)|\, dt\, d\tau=NT^2|\Phi(0)|
\]
where the above is due to the properties of positive semi-definite functions. 
\end{proof}
Since the condition number of an interpolation matrix $A$ is given by the ratio of the maximum and minimum eigenvalues, the presence of the $T^2$ in the above inequality nullifies the $1/T^2$ appearing in estimate for the minimal eigenvalue. In essence, this shows that can take the minimal distance between trajectories as a measurement of how good our approximations will be.
\subsection{Main Inequalities}
Following the notion established in \ref{contraction mapping} consider the following,
\begin{align}\label{maininequality}\|A(\phi)-A(\psi)\|_H
&=\left\|\sum_{j=1}^s\hat{w}_{j,\phi}\Gamma_{\hat{r}_{j,\phi}}(x)-\sum_{j=1}^s\hat{w}_{j,\psi}\Gamma_{\hat{r}_{j,\psi}}(x)\right\|_H\nonumber\\
&\leq \sum_{j=1}^s|\hat{w}_{j,\phi}|\|\Gamma_{\hat{r}_{j,\phi}}-\Gamma_{\hat{r}_{j,\psi}}\|_H+\sum_{j=1}^s|\hat{w}_{j,\phi}-\hat{w}_{j,\psi}|\|\Gamma_{\hat{r}_{j,\psi}}\|_H\
\end{align}
To proceed forward, we will need to establish control on terms above in as a function of $\|\phi-\psi\|_H$. 
Note by definition $\begin{pmatrix}\hat{w}_{1,\phi}  \cdots  \hat{w}_{s,\phi}\end{pmatrix}^\top$ satisfies
\begin{equation}\begin{pmatrix} \langle \Gamma_{\hat r_{1,\phi}}, \Gamma_{\hat r_{1,\phi}} \rangle &\cdots& \langle \Gamma_{\hat r_{s,\phi}}, \Gamma_{\hat r_{1,\phi}} \rangle\\
\vdots &\ddots& \vdots\\
\langle \Gamma_{\hat r_{1,\phi}}, \Gamma_{\hat r_{M,\phi}} \rangle&\cdots& \langle \Gamma_{\hat r_{M,\phi}}, \Gamma_{\hat r_{M,\phi}} \rangle\end{pmatrix}
\begin{pmatrix}
\hat{w}_{1,\phi} \\ \vdots \\ \hat{w}_{s,\phi}
\end{pmatrix}
=
\begin{pmatrix}
D_{1,\phi} + \langle \phi, \Gamma_{\hat r_{1,\phi}} \rangle\\ \vdots \\ D_{s,\phi} + \langle \phi, \Gamma_{\hat r_{s,\phi}}\rangle
\end{pmatrix}.
\end{equation}
For notational convenience, given a $\phi\in H(R)$ let 
\[G_\phi=\begin{pmatrix} \langle \Gamma_{\hat r_{1,\phi}}, \Gamma_{\hat r_{1,\phi}} \rangle & \cdots & \langle \Gamma_{\hat r_{s,\phi}}, \Gamma_{\hat r_{1,\phi}} \rangle\\
\vdots & \ddots & \vdots\\
\langle \Gamma_{\hat r_{1,\phi}}, \Gamma_{\hat r_{s,\phi}} \rangle & \cdots & \langle \Gamma_{\hat r_{s,\phi}}, \Gamma_{\hat r_{s,\phi}} \rangle\end{pmatrix}\]
Now,
\begin{align}\label{norm1}
\left\|\begin{pmatrix}\hat{w}_{1,\phi} \\ \vdots \\ \hat{w}_{s,\phi}\end{pmatrix}-\begin{pmatrix}\hat{w}_{1,\psi} \\ \vdots \\ \hat{w}_{s,\psi}\end{pmatrix}\right\|_2&\leq 
\|G_\phi^{-1}\|\left\|\begin{pmatrix}
D_{1,\phi}-D_{1,\psi} + \langle \phi, \Gamma_{\hat r_{1,\phi}} \rangle -\langle \psi, \Gamma_{\hat r_{1,\psi}}\rangle\\
\vdots \\ 
D_{s,\phi}-D_{s,\psi} + \langle \phi, \Gamma_{\hat r_{s,\phi}} \rangle -\langle \psi, \Gamma_{\hat r_{s,\psi}}\rangle\\
\end{pmatrix}\right\|_2\\
\label{norm2}
&+\|G_{\phi}^{-1}-G_{\psi}^{-1}\|
\left\|\begin{pmatrix}
D_{1,\psi} + \langle \psi, \Gamma_{\hat r_{1,\psi}} \rangle\\ \vdots \\ D_{s,\psi} + \langle \psi, \Gamma_{\hat r_{s,\psi}}\rangle\end{pmatrix}\right\|_2
\end{align}
A portion of the proof for Theorem\ref{contraction mapping} will be on establishing suitable control over of the norms in \eqref{norm1}, and \eqref{norm2}. Before proceeding, we will need the following lemma.
\begin{lemma}
Given a $\phi,\psi\in H(R)$, then \[\|G_\phi-G_\psi\|\leq ST^2\|\phi-\psi\|_H\]
for a constant $S\propto s$ and independent of $T$
\end{lemma}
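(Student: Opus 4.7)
The plan is to work entry-by-entry. The $(i,j)$ entry of $G_\phi - G_\psi$ is
\[
(G_\phi - G_\psi)_{ij} \;=\; \int_0^T\!\!\int_0^T \bigl[K(\hat r_{i,\phi}(t),\hat r_{j,\phi}(\tau)) \;-\; K(\hat r_{i,\psi}(t),\hat r_{j,\psi}(\tau))\bigr]\,dt\,d\tau,
\]
so the task reduces to bounding the integrand uniformly. Rather than going through Corollary \ref{normdiffGammas} (which passes through a square root and would only yield a $\|\phi-\psi\|_H^{1/2}$ bound), the idea is to apply the mean value theorem directly to $K$ as a function on $\mathbb{R}^2 \times \mathbb{R}^2$. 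Since the trajectories take values in the compact set $R$ and $K$ is smooth (e.g. Gaussian), the partial gradients $\nabla_x K$ and $\nabla_y K$ are uniformly bounded on $R \times R$ by some constant $L$.

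First I would invoke the MVT in each slot separately:
\[
|K(\hat r_{i,\phi}(t),\hat r_{j,\phi}(\tau)) - K(\hat r_{i,\psi}(t),\hat r_{j,\psi}(\tau))| \;\leq\; L\bigl(|\hat r_{i,\phi}(t) - \hat r_{i,\psi}(t)| + |\hat r_{j,\phi}(\tau) - \hat r_{j,\psi}(\tau)|\bigr).
\]
Next, apply Proposition \ref{diffeqnormbound} to each of the two trajectory differences, yielding a pointwise bound $\leq 2LM\|\phi-\psi\|_H$. Integrating over $[0,T]\times[0,T]$ gives
\[
|(G_\phi - G_\psi)_{ij}| \;\leq\; 2LM\,T^2\,\|\phi-\psi\|_H.
\]

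Finally I would pass from the entrywise estimate to a matrix norm estimate. Using, say, the Frobenius norm to dominate the operator norm, $\|G_\phi - G_\psi\| \leq \sqrt{\sum_{i,j=1}^{s} |(G_\phi-G_\psi)_{ij}|^2} \leq s\cdot 2LM\,T^2\,\|\phi-\psi\|_H$, which has the required form with $S = 2sLM$, so $S \propto s$ and independent of $T$.

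The main (mild) obstacle is justifying the uniform bound on $\nabla K$: we need the trajectories $\hat r_{i,\phi}$ produced by the algorithm to remain inside a common compact subset of $R$ on which $K$ is $C^1$ with bounded derivative. For the Gaussian kernel used elsewhere in the paper this is automatic, and more generally the assumption can be folded into the hypothesis that $H(R)$ consists of sufficiently smooth functions (as already used in Theorem \ref{thm:simpsons-convergence}). Once this is in place the rest is a direct MVT-plus-Proposition-\ref{diffeqnormbound} computation.
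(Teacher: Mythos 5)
Your proof is correct, but it takes a genuinely different route from the paper's. The paper argues as in Corollary \ref{maxeigenvalue}: it bounds $\lambda_{\max}(G_\phi-G_\psi)$ by $s$ times the largest entry, treats that entry as $\langle \Gamma_{\hat r_{i,\phi}}-\Gamma_{\hat r_{i,\psi}},\, \Gamma_{\hat r_{j,\phi}}-\Gamma_{\hat r_{j,\psi}}\rangle$, and then applies Cauchy--Schwarz together with Corollary \ref{normdiffGammas}, so that the two half-power factors $\|\phi-\psi\|_H^{1/2}$ multiply to give the linear bound $sCT^2\|\phi-\psi\|_H$. You instead stay with the double-integral representation of each Gram entry, apply the mean value theorem directly to $K$ on $R\times R$, insert Proposition \ref{diffeqnormbound} for the trajectory deviations, integrate to get the entrywise bound $2LMT^2\|\phi-\psi\|_H$, and pass to the operator norm through the Frobenius norm. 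Your route buys two things: the linear dependence on $\|\phi-\psi\|_H$ comes from a single first-order estimate rather than from squaring a half-power bound, and you avoid identifying the $(i,j)$ entry of $G_\phi-G_\psi$ with the inner product of differences --- that identification is not an identity, since the correct expansion is $\langle \Gamma_{\hat r_{i,\phi}}-\Gamma_{\hat r_{i,\psi}},\,\Gamma_{\hat r_{j,\phi}}\rangle+\langle \Gamma_{\hat r_{i,\psi}},\,\Gamma_{\hat r_{j,\phi}}-\Gamma_{\hat r_{j,\psi}}\rangle$, and with that expansion the Cauchy--Schwarz/Corollary-\ref{normdiffGammas} route would only produce a $\|\phi-\psi\|_H^{1/2}$ dependence; your direct MVT argument delivers the stated linear estimate without this issue. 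The price is your explicit hypothesis that $\nabla K$ is uniformly bounded on $R\times R$ and that the perturbed trajectories remain in $R$; for the Gaussian kernel used throughout the paper the gradient bound is global, the setup already takes the trajectories to map into $R$, and Corollary \ref{normdiffGammas} itself uses a mean value bound on $\nabla_y K$, so you are not assuming anything beyond what the paper already relies on.
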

\begin{proof}
Similar to the analysis in corollary \ref{maxeigenvalue}
\begin{align*}\lambda_{\max}(G_\phi-G_\psi)&\leq s \max_{i,j=1,\ldots s}|\langle \Gamma_{\hat r_{i,\phi}}-\Gamma_{\hat r_{i,\psi}}, \Gamma_{\hat r_{j,\phi}}-\Gamma_{\hat r_{j,\psi}} \rangle|\\
&\leq s\max_{i,j=1,\ldots,s}\|\Gamma_{\hat r_{i,\phi}}-\Gamma_{\hat r_{i,\psi}}\|_H\cdot\|\Gamma_{\hat r_{j,\phi}}-\Gamma_{\hat r_{j,\psi}}\|_H \\
&\leq ST^2\|\phi-\psi\|_H.
\end{align*}
Hence, by an application of corollary \ref{normdiffGammas} we have 
\[\lambda_{\max}(G_\phi-G_\psi)\leq sCT^2\|\phi-\psi\|_H.\]
\end{proof}
\subsection*{Analysis of the upper bounds:}
Given that by definition \[|D_{j,\phi}-D_{j,\psi}|=|\hat{r}_{j,\phi}(T)-\hat{r}_{j,\psi}(T)|\leq M\cdot\|\phi-\psi\|_H\] by Proposition \ref{diffeqnormbound}  and 
\[|\langle \phi,\Gamma_{\hat{r}_{j,\phi}}\rangle-\langle \psi,\Gamma_{\hat{r}_{j,\psi}}\rangle|\leq \|\phi\|_H\|\Gamma_{\hat{r}_{j,\phi}}-\Gamma_{\hat{r}_{j,\psi}}\|_H+\|\phi-\psi\|_H\cdot\|\Gamma_{\hat{r}_{j,\psi}}\|_H
\]
from inequality \ref{norm1}
\begin{align}\label{estimate1}
&\|G_\phi^{-1}\|\left\|\begin{pmatrix}
D_{1,\phi}-D_{1,\psi} + \langle \phi, \Gamma_{\hat r_{1,\phi}} \rangle -\langle \psi, \Gamma_{\hat r_{1,\psi}}\rangle\\
\vdots \\ 
D_{s,\phi}-D_{s,\psi} + \langle \phi, \Gamma_{\hat r_{s,\phi}} \rangle -\langle \psi, \Gamma_{\hat r_{s,\psi}}\rangle\\
\end{pmatrix}\right\|_2\\
&\leq\frac{2\mu}{C_2}\frac{q_{X,T,\phi}^2}{\exp(-M_2^2/(q_{X,T,\phi}^2\mu))T^2}\cdot s\left[M\|\phi-\psi\|_H+\|\phi\|_H\sqrt{C}T\|\phi-\psi\|^{1/2}_H+\max\|\Gamma_{j,\psi}\|_H\|\phi-\psi\|_H\right]\nonumber
\end{align}
By an application of Corollary \ref{normdiffGammas} and Theorem \ref{spectralbound}. Here we have used $q_{X,T,\phi}$ to denote the separation distance for the $\phi$-related trajectories. 

Turning our attention to inequality \eqref{norm2}
\begin{align}\label{estimate2}&\|G_{\phi}^{-1}-G_{\psi}^{-1}\|
\left\|\begin{pmatrix}
D_{1,\psi} + \langle \psi, \Gamma_{\hat r_{1,\psi}} \rangle\\ \vdots \\ D_{s,\psi} + \langle \psi, \Gamma_{\hat r_{s,\psi}}\rangle\end{pmatrix}\right\|_2\\
\leq& \|G_\phi\|^{-1}\|G_\phi-G_\psi\|\|G_\psi\|^{-1}\cdot\left\|\begin{pmatrix}
D_{1,\psi} + \langle \psi, \Gamma_{\hat r_{1,\psi}} \rangle\\ \vdots \\ D_{s,\psi} + \langle \psi, \Gamma_{\hat r_{s,\psi}}\rangle\end{pmatrix}\right\|_2\nonumber\\
\leq& \frac{4\mu^2}{C_2^2}\frac{q_{X,T,\phi}^2q_{X,T,\psi}^2}{\exp(-((M_2^2/q_{X,T,\phi}^2\mu)+(M_2^2/q_{X,T,\psi}^2\mu))T^4} \cdot sCT^2\|\phi-\psi\|_H\left\|\begin{pmatrix}
D_{1,\psi} + \langle \psi, \Gamma_{\hat r_{1,\psi}} \rangle\\ \vdots \\ D_{s,\psi} + \langle \psi, \Gamma_{\hat r_{s,\psi}}\rangle\end{pmatrix}\right\|_2\nonumber
\end{align}

\subsection*{Sufficient conditions for convergence:}
In order to prove convergence of our algorithm we will make some assumptions. Let $1>\varepsilon>0$, we will first assume that our iterative procedure starts reasonably close to the true solution $F$. In terms of the above, we are going to make the assumption that for both $\phi$ and $\psi$ we have $\|\phi-F\|_H\leq\varepsilon/2$, $\|\psi-F\|_H\leq\varepsilon/2$ and $\|\phi-\psi\|_H\leq\varepsilon<1$. This in turn gives us that $|D_{j,\phi}|,|D_{j,\psi}|<M\varepsilon$. We will also make an assumption on our underlying vector field. We are going to assume that the vector field has at-least the smoothness conditions stated in section \ref{setup} and is not overly powerful in comparison to the mobile sensor engines. In terms of the above, given the differential equation 
\[\dot{p}=s_j\begin{pmatrix}\cos(\theta_j)& \sin(\theta_j)\end{pmatrix}^\top+G(p)\quad p(0)=p_j\]
with solution $r_{G}(t)$, we will assume $\|\Gamma_{r_{G}}-\Gamma_{s}\|_H<\varepsilon$ where $s(t)$ is the straight line solution to the differential equation above with $G=0$, and that $\|\Gamma_s\|$ is bounded by some constant independent of $T$. The latter assumption is not unreasonable, as $\|\Gamma_s\|$ would be bounded by the norm of an occupation kernel for the straight line trajectory spanning the diameter of our feature space. With these assumptions, we have $\|\Gamma_{j,\phi}\|_H,\|\Gamma_{j,\psi}\|_H\leq \|\Gamma_{s}\|_H+\varepsilon$. Moreover, by Cauchy Schwarz on $|\langle\phi, \Gamma_{\hat{r}_{j,\phi}}\rangle|$ we have $\left\|\begin{pmatrix}
D_{1,\phi} + \langle \phi, \Gamma_{\hat r_{1,\phi}} \rangle,\cdots, D_{s,\psi} + \langle \phi, \Gamma_{\hat r_{s,\phi}}\rangle\end{pmatrix}^{\top}\right\|_2$ is bounded by some constant $L$. This above discussion culminates in the following lemma. 
\begin{lemma}\label{term1}
With the above assumptions, 
\[\left\|\begin{pmatrix}\hat{w}_{1,\phi} \\ \vdots \\ \hat{w}_{s,\phi}\end{pmatrix}\right\|_2\leq 
\|G_\phi^{-1}\|\left\|\begin{pmatrix}
D_{1,\phi} + \langle \phi, \Gamma_{\hat r_{1,\phi}} \rangle\\ \vdots \\ D_{s,\phi} + \langle \phi, \Gamma_{\hat r_{s,\phi}}\rangle
\end{pmatrix}\right\|_2\leq L\cdot\frac{2\mu}{C_2}\frac{q_{X,T,\phi}^2}{\exp(-M_2^2/(q_{X,T,\phi}^2\mu))T^2}
\]
where $L$ is independent of $T$.
\end{lemma}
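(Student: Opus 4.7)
The plan is to split the claim into the two inequalities displayed and handle each separately. The left inequality is an essentially trivial consequence of the defining linear system for the weights: letting $b_\phi$ denote the right-hand side column vector, the definition in Theorem \ref{contraction mapping} says $G_\phi \, w_\phi = b_\phi$ where $w_\phi = (\hat w_{1,\phi},\dots,\hat w_{s,\phi})^\top$. Theorem \ref{spectralbound} guarantees $\lambda_{\min}(G_\phi) > 0$, so $G_\phi$ is invertible and $w_\phi = G_\phi^{-1} b_\phi$; submultiplicativity of the operator norm then yields $\|w_\phi\|_2 \le \|G_\phi^{-1}\|\,\|b_\phi\|_2$.

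For the right inequality I would bound the two factors separately. Since $G_\phi$ is symmetric positive definite, $\|G_\phi^{-1}\| = 1/\lambda_{\min}(G_\phi)$, and Theorem \ref{spectralbound} (applied with separation distance $q_{X,T,\phi}$) gives directly
\[
\|G_\phi^{-1}\| \;\le\; \frac{2\mu}{C_2}\,\frac{q_{X,T,\phi}^{2}}{\exp\!\bigl(-M_2^{2}/(q_{X,T,\phi}^{2}\mu)\bigr)\,T^{2}}.
\]
To bound $\|b_\phi\|_2$, I apply the triangle inequality entrywise and then Cauchy--Schwarz: each component is at most $|D_{j,\phi}| + |\langle \phi,\Gamma_{\hat r_{j,\phi}}\rangle| \le |D_{j,\phi}| + \|\phi\|_H\,\|\Gamma_{\hat r_{j,\phi}}\|_H$. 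The standing assumptions listed before the lemma statement give three $T$-independent bounds: $|D_{j,\phi}| \le M\varepsilon$, $\|\phi\|_H \le \|F\|_H + \varepsilon/2$, and $\|\Gamma_{\hat r_{j,\phi}}\|_H \le \|\Gamma_s\|_H + \varepsilon$ (the last one using the assumption that the perturbed trajectory's occupation kernel is within $\varepsilon$ of the straight-line occupation kernel whose norm is $T$-independent). Because the sum has a fixed number $s$ of entries, taking the Euclidean norm produces an overall bound $\|b_\phi\|_2 \le L$ for some constant $L$ independent of $T$. Multiplying the two bounds gives the desired estimate.

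The only point that really needs care is verifying that the constant $L$ is genuinely $T$-independent: the factor $T^{-2}$ in the final estimate comes entirely from $\|G_\phi^{-1}\|$ via Theorem \ref{spectralbound}, so every term contributing to $\|b_\phi\|_2$ must be controlled without introducing a $T$-dependent growth. This is exactly what the hypothesis $\|\Gamma_{r_G}-\Gamma_s\|_H < \varepsilon$ together with the $T$-independent bound on $\|\Gamma_s\|_H$ is designed to provide. With those in hand the argument is a straightforward chain of inequalities rather than a technical obstacle.
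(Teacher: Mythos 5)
Your proposal is correct and follows essentially the same route as the paper: the paper states this lemma as the culmination of the discussion immediately preceding it, which bounds the right-hand-side vector by a $T$-independent constant $L$ via $|D_{j,\phi}|<M\varepsilon$, Cauchy--Schwarz, and the assumption $\|\Gamma_{\hat r_{j,\phi}}\|_H\le\|\Gamma_s\|_H+\varepsilon$ with $\|\Gamma_s\|_H$ bounded independently of $T$, and then invokes Theorem \ref{spectralbound} to control $\|G_\phi^{-1}\|$ by the reciprocal of the minimal-eigenvalue bound. Your write-up simply makes these steps (invertibility, $\|G_\phi^{-1}\|=1/\lambda_{\min}(G_\phi)$, and the entrywise bound on $b_\phi$) explicit, which is exactly the intended argument.
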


Our final assumption is that
 \[\|\Gamma_{r_\phi}-\Gamma_{r_\psi}\|\leq CT\|\phi-\psi\|
 \]
for some constant $C$ independent of $T$. This is an assumption on the regularity on the solutions to the differential equations stated in terms of the occupation kernels. While more work is needed, its possible that this condition could be stated in terms of Frechet differentiability. 
\subsection*{Proof of our Main Theorem:}
We are now in a position to prove Theorem \ref{contraction mapping}
\begin{proof}
Consider, 
\begin{align*}\label{maininequality-1}\|A(\phi)-A(\psi)\|_H
=&\left\|\sum_{j=1}^s\hat{w}_{j,\phi}\Gamma_{\hat{r}_{j,\phi}}(x)-\sum_{j=1}^s\hat{w}_{j,\psi}\Gamma_{\hat{r}_{j,\psi}}(x)\right\|_H\nonumber\\
\leq& \sum_{j=1}^s|\hat{w}_{j,\phi}|\|\Gamma_{\hat{r}_{j,\phi}}-\Gamma_{\hat{r}_{j,\psi}}\|_H+\sum_{j=1}^s|\hat{w}_{j,\phi}-\hat{w}_{j,\psi}|\|\Gamma_{\hat{r}_{j,\psi}}\|_H\
\end{align*}
By our assumptions and Lemma \ref{term1} we have that the first term is proportional to $\frac{1}{T}\|\phi-\psi\|_H$. By inequalities \eqref{norm1}, \eqref{norm2} and a reevaluation of our estimates \eqref{estimate1} and \eqref{estimate2} given the assumptions, the second term in the above is also proportional to $\frac{1}{T}\|\phi-\psi\|_H$. Thus for a suitable $T$ we have a contraction mapping and by Contraction Mapping Theorem our algorithm will converge.
\end{proof}
 \section{Numerical Experiments}
For our first experiment, Figure \ref{Experiment1}, we generated a slope field $F(x)$ using a linear combination of Gaussian kernels and generated a set of random points and angles to serve as our anticipated dynamics using unit speed. 
\[F(x)=\frac{1}{8}\begin{pmatrix}f_1(x)\\f_2(x)\end{pmatrix}\]
\begin{align*}f_1(x)&=
            5\exp\left(-2\left\|x-\begin{bmatrix}.25\\.25\end{bmatrix}\right\|^2\right)- .2\exp\left(-\left\|x-\begin{bmatrix}.25\\.75\end{bmatrix}\right\|^2\right)\\ 
            &+ 2\exp\left(-\left\|x-\begin{bmatrix}.75\\.75\end{bmatrix}\right\|^2\right)- 5\exp\left(-2\left\|x-\begin{bmatrix}.75\\.25\end{bmatrix}\right\|^2\right)\\
            f_2(x)&=
            3\exp\left(-\left\|x-\begin{bmatrix}.25\\.25\end{bmatrix}\right\|^2\right)+\exp\left(-\left\|x-\begin{bmatrix}.25\\.75\end{bmatrix}\right\|^2\right)\\ 
            &- 3\exp\left(-3\left\|x-\begin{bmatrix}.75\\.75\end{bmatrix}\right\|^2\right)+ \exp\left(-\left\|x-\begin{bmatrix}.75\\.25\end{bmatrix}\right\|^2\right)
\end{align*}

\begin{figure}[ht]
\centering
\begin{subfigure}{.5\textwidth}
  \centering
  \includegraphics[width=\linewidth]{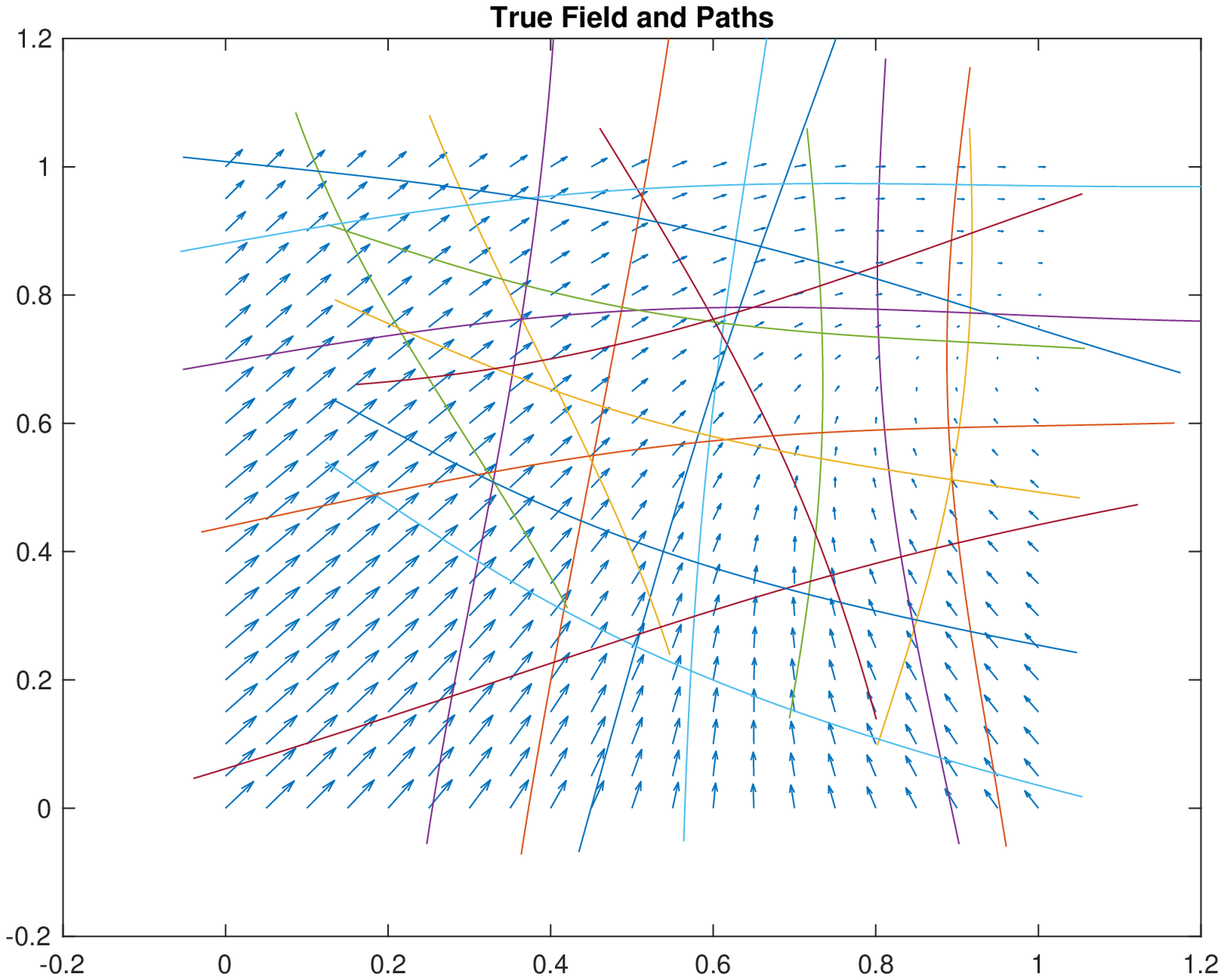}
  \caption{True vector field and trajectories.}
  \label{generated field}
\end{subfigure}%
\begin{subfigure}{.5\textwidth}
  \centering
  \includegraphics[width=\linewidth]{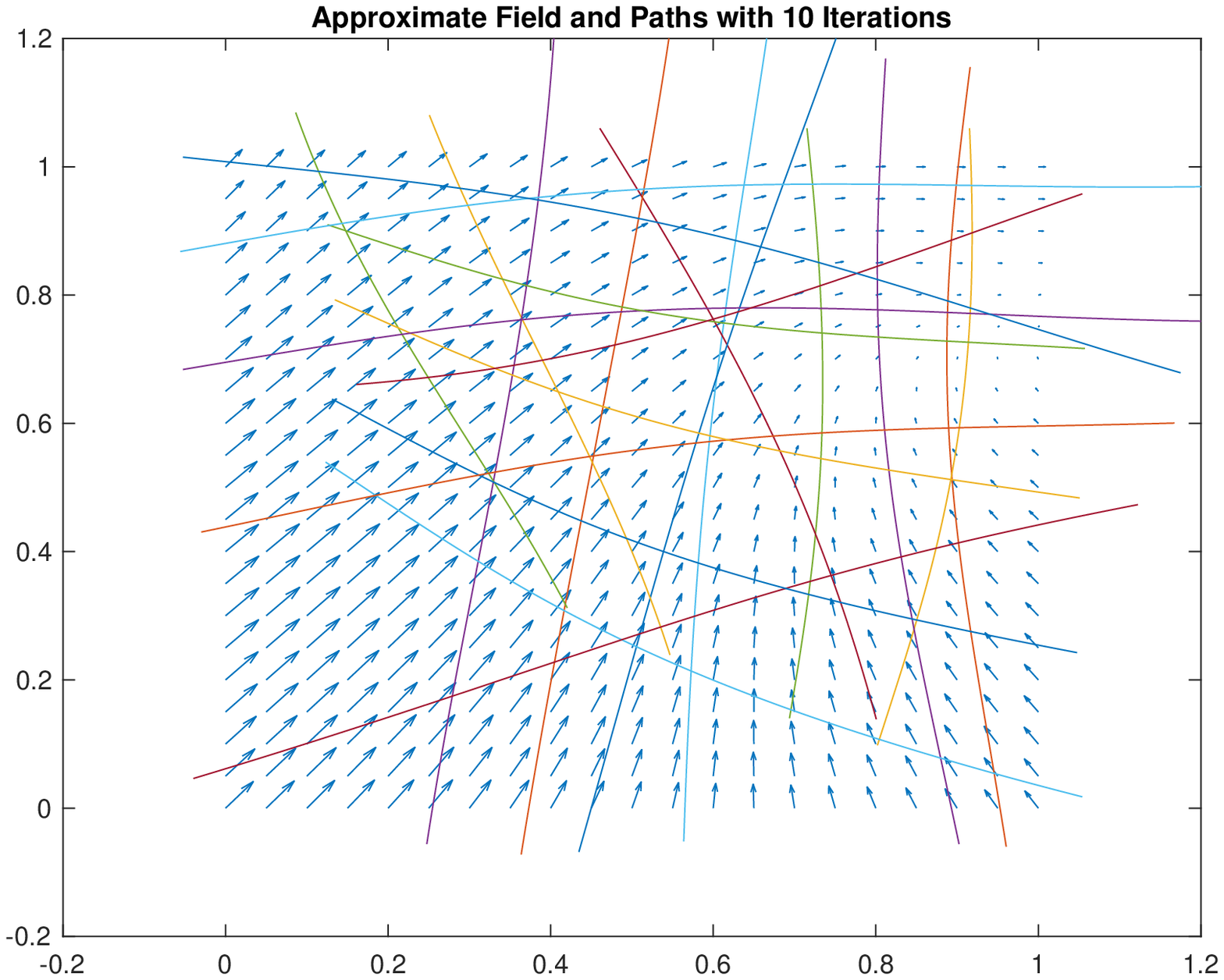}
  \caption{Results of Experiment 1.}
  \label{10iterates-gen}
\end{subfigure}
\caption{The true trajectories are calculated via RK4 over the time frame $[0,1]$. Using Gaussian RBFs with a kernel width of 1, we performed $10$ iterations of Algorithm \ref{alg}.}
\label{Experiment1}
\end{figure}
For our second experiment, Figures \ref{init},\ref{5iterates-real},\ref{10iterates-real}, and \ref{20iterates-real}, we used the Gliderpalooza 2013 data first presented in \cite{SCC.Chang.Wu.ea2017}. The data was for 31 sequential trajectory segments and contained the initial positions, the true final positions, dead-reckoned final positions, speeds and dead-reckoning times. For their experiments, they used averaged speeds and an average dead-reckoning time of 3.5 hours.

Algorithm \ref{alg} is designed to work on multiple trajectories necessitating some prior calculations. The data was scaled by a factor of $10^{-4}$, then the provided initial positions and dead-reckoned final positions were used to calculate the initial directions along with calculated speeds using their averaged dead-reckoning time of $3.5$ hours. The sequential trajectory data was broken in to $31$ separate trajectories. Using exponential kernels, we performed 5, 10, and 20 iterations of Algorithm \ref{alg} using a kernel width of $\mu=1/170$ for the $5$ iteration run and a kernel width of $\mu=1/10,000$ for the $10$ and $20$ iteration runs. The kernel widths for each of these runs were selected to produce well conditioned Gram matrices. 
\begin{figure}[ht]
\centering
\begin{subfigure}{.5\textwidth}
  \centering
  \includegraphics[width=\linewidth]{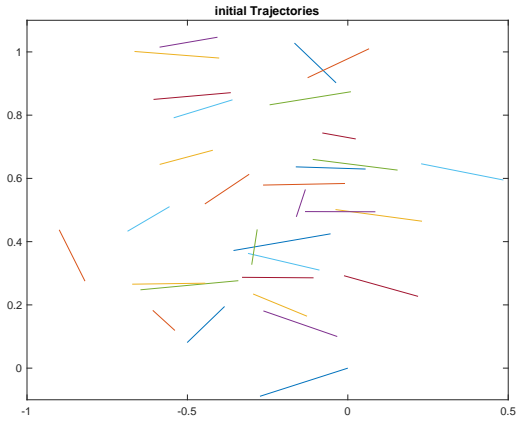}
  \caption{Initial trajectories.}
  \label{init}
\end{subfigure}%
\begin{subfigure}{.5\textwidth}
  \centering
  \includegraphics[width=\linewidth]{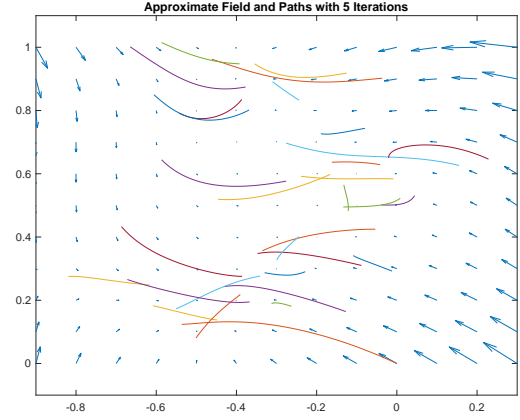}
\caption{Approximated field and trajectories after\\
\hphantom{\quad\ } 5 iterations.}
\label{5iterates-real}%
\end{subfigure}
\begin{subfigure}{.5\textwidth}
\includegraphics[width=\linewidth]{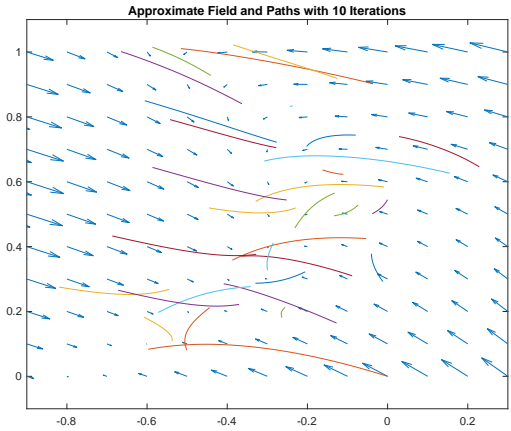}%
\caption{Approximate field and trajectories with\\ \hphantom{\quad\ }10 iterations}%
\label{10iterates-real}%
\end{subfigure}%
\begin{subfigure}{.5\textwidth}
\centering
\includegraphics[width=\linewidth]{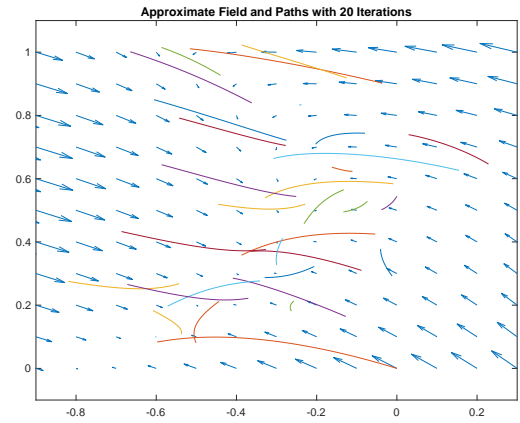}%
\caption{Approximate field and trajectories with\\\hphantom{\quad\ } 20 iterations}%
\label{20iterates-real}%
\end{subfigure}
\caption{Calculated initial trajectories and output of Algorithm 1 for 5, 10, and 20 iterations on Gliderpalooza data.}
\end{figure}

\section{Discussion}
This approach to motion tomography has several advantages over that of \cite{SCC.Chang.Wu.ea2017}. In the context of this aim, the flow field is approximated using the occupation kernels as basis functions for approximation, whereas \cite{SCC.Chang.Wu.ea2017} requires a piece-wise constant description of the flow field or a parameterization with respect to Gaussian RBFs. Moreover, \cite{SCC.Chang.Wu.ea2017} renormalizes the integral by multiplying and dividing by $\|\dot{\tilde{r}}(t)\|_2$ to artificially convert the integrals to line integrals. The renormalization may ultimately produce divide by zero errors that leads to stronger assumptions on the dynamical systems. The occupation kernel method avoids the renormalization and does not add further restrictions on the dynamics. Finally, the representation of the flow field with respect to the occupation kernel basis allows for the application of the approximation powers of RKHSs. It should also be noted that although the applications of this technique apply mainly to $\mathbb{R}^2$ valued functions, there is no inherit reason to limit to $\mathbb{R}^2$. That is, this technique would apply to $\mathbb{R}^d$ valued functions. 
For comparison, Figure \ref{dongsik-gendata-result} shows the output of the method in \cite{SCC.Chang.Wu.ea2017} from the data in Experiment 1. Figures \ref{error_dongsik}, \ref{error_kernel} are the plots of the difference between the true vector field and the approximated vectorfields for both methods.
\begin{figure}[H]
\centering
\begin{subfigure}{.5\linewidth}
    \centering
    \includegraphics[width=\linewidth]{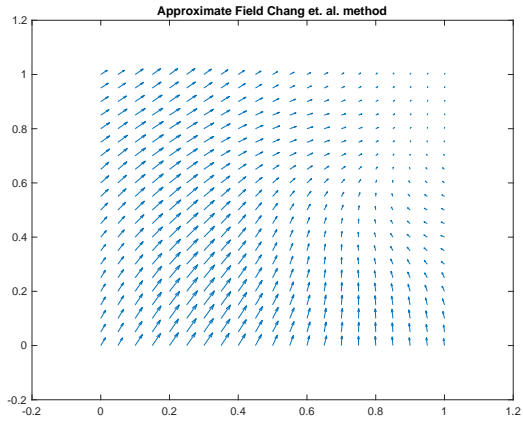}%
    \caption{Output of method from \cite{SCC.Chang.Wu.ea2017}}%
    \label{dongsik-gendata-result}%
\end{subfigure}%
\begin{subfigure}{.5\linewidth}
    \centering
    \includegraphics[width=\linewidth]{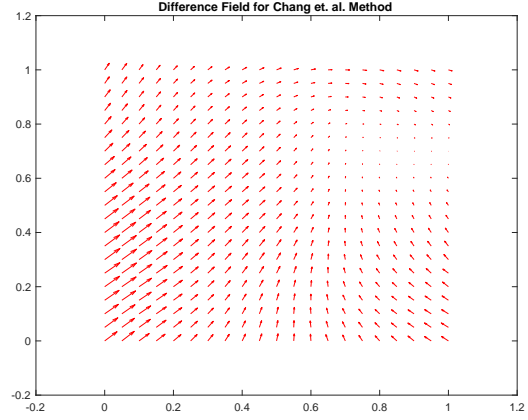}%
    \caption{Difference between the true and approximate field given by the method in \cite{SCC.Chang.Wu.ea2017}}%
    \label{error_dongsik}%
\end{subfigure}
\begin{subfigure}{.5\linewidth}
    \centering%
    \includegraphics[width=\linewidth]{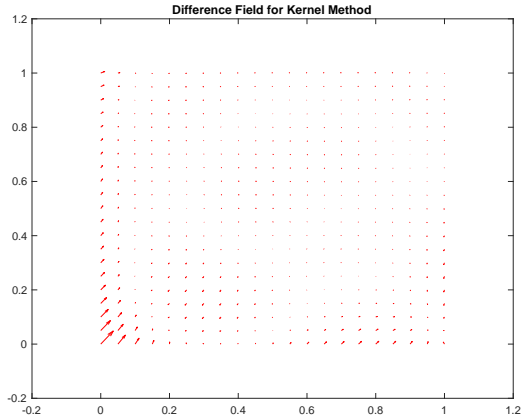}%
    \caption{Difference between the true and approximate field given by Algorithm \ref{alg}.}%
    \label{error_kernel}%
\end{subfigure}%
\begin{subfigure}{.5\linewidth}
    \centering%
    \includegraphics[width=\linewidth]{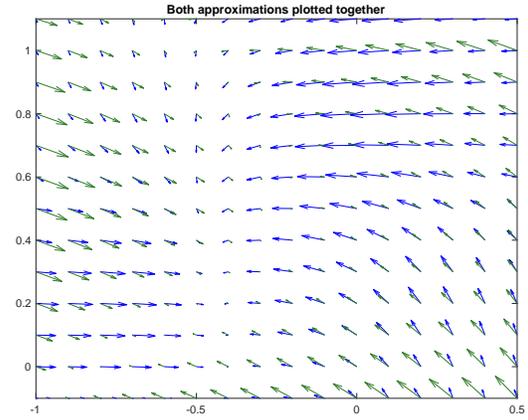}%
    \caption{Simultaneous plots, the output of Algorithm \ref{alg} is in green and the output of the method of \cite{SCC.Chang.Wu.ea2017} is shown in blue.}%
    \label{both}%
\end{subfigure}
\caption{Accuracy comparison between the method of Chang et. al. and Algorithm \ref{alg}.}
\end{figure}

Due to its structure for spatial representation of the flow-field, the method in \cite{SCC.Chang.Wu.ea2017} leads to poor performance for the region where sufficient trajectory information is not available. Quantitatively, given a sample of vectors $\{V(x_1,y_1),\ldots, V(x_n,y_n)\}$ and $\{W(x_1,y_1),\ldots, W(x_n,y_n)\}$ from two vector fields $V(x,y)$ and $W(x,y)$, we can define the max error (relative to $V$) as \[\text{Max Error}=\max_i \{\|V(x_i,y_i)-W(x_i,y_i)\|/\|V(x_i,y_i)\| \}\]
and the mean error (relative to $V$) as 
\[\text{Mean Error}=\operatorname{Mean} \{\|V(x_i,y_i)-W(x_i,y_i)\|/\|V(x_i,y_i)\|\}.\]
Let $V(x,y)$ represent the true vector field and let $W(x,y)$ denote the estimated vector field. Given the samples shown above, the max and mean errors for both methods are summarized in Table \ref{tab:errortable}.
\begin{table}[ht]
\centering%
\def\arraystretch{1.25}
\begin{tabular}{l|cc}
\ \ & Method of Chang et al. & Algorithm \ref{alg}\\
\hline
Max Error    & 1.1849 &  0.25321 \\
\hline
Mean Error   &0.51549 & 0.025642\\
\end{tabular}
\caption{Max and Mean Errors for the two methods.}
\label{tab:errortable}
\end{table}

Moreover, we have plotted number of iterates vs the Mean Error. The results can be seen in Figure \ref{accuracy}.
\begin{figure}[ht]
    \centering
    \includegraphics[width=.75\linewidth]{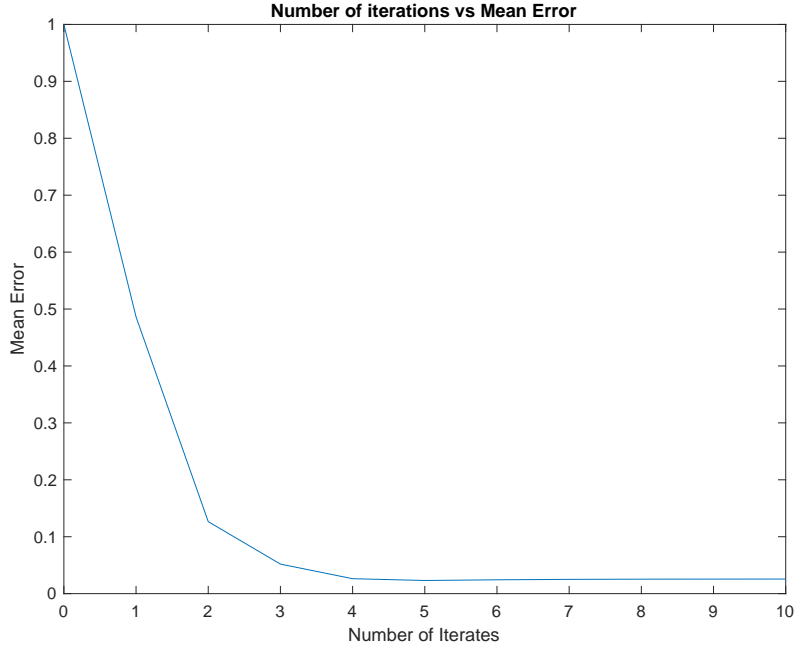}
    \caption{Number of iterates vs the Mean Error.}
    \label{accuracy}
\end{figure}
In the absence of ground truth in Experiment 2 we show a simultaneous plot of approximated fields from both Algorithm \ref{alg} and the method in \cite{SCC.Chang.Wu.ea2017} in Figure \ref{both}. For the set of samples depicted in Figure \ref{both}, we calculated the maximum norm difference, the mean norm difference, and the variance of the norm differences between the two fields. This is summarized in Table \ref{tab:stats}.
\begin{table}[ht]
\centering%
\def\arraystretch{1.25}
\begin{tabular}{l|l}
max norm difference   & 0.14877 \\
\hline
mean norm difference  &0.0088628  \\
\hline
variance& 0.0001869\\
\end{tabular}
\caption{Norm difference statistics between the two approximated fields.}
\label{tab:stats}
\end{table}

\bibliographystyle{plain}
\bibliography{bib}

\end{document}